\newenvironment{enumroman}{\begin{enumerate}[\upshape (i)]}
                                                {\end{enumerate}}
\newtheorem{prop}[subsection]{Proposition}
\newtheorem{cor}[subsection]{Corollary}
\newtheorem{lemma}[subsection]{Lemma}
\theoremstyle{definition}
\newtheorem{rem}[subsection]{Remark}
\newtheorem{example}[subsection]{Example}
\newtheorem{definition}[subsection]{Definition}
\newtheorem{term}[subsection]{Terminology}
\newtheorem*{acknowledgements}{Acknowledgements}
\newcommand{\colim}{\text{colim}}
\newcommand{\SSets}{\mathcal{SS}ets}
\newcommand{\xla}{\xleftarrow}
\newcommand{\set}[2]{{\{\,#1\mid#2\,\}}}
\newcommand{\N}{\mathbb{N}}
\newcommand{\defeq}{\overset{\mathrm{def}}=}
\newcommand{\Psh}{\mathrm{Psh}}
\newcommand{\nd}{\mathrm{nd}}
\newcommand{\dg}{\mathrm{dg}}
\newcommand{\Set}{\mathrm{Set}}
\newcommand{\slice}[2]{(#1\downarrow #2)}
\newcommand{\ob}{{\operatorname{ob}}}
\begin{document}

\title{Reedy categories and the $\Theta$-construction}
\author{Julia E. Bergner}
\author{Charles Rezk}
\date{ \today}
\address{Department of Mathematics \\ University of California \\ Riverside, CA 92521}
\email{bergnerj@member.ams.org}

\address{Department of Mathematics \\
University of Illinois at Urbana-Champaign \\
Urbana, IL}
\email{rezk@math.uiuc.edu}

\keywords{Reedy category, $\Theta$-construction}

\subjclass[2010]{Primary 55U35; Secondary 55U10, 18G20, 18G55}

\thanks{The authors were partially supported by NSF grants DMS-0805951 and DMS-1006054.}

\maketitle

\begin{abstract}
We use the notion of multi-Reedy category to prove that, if $\mathcal C$ is a Reedy category, then $\Theta \mathcal C$ is also a Reedy category.  This result gives a new proof that the categories $\Theta_n$ are Reedy categories.  We then define elegant Reedy categories, for which we prove that the Reedy and injective model structures coincide.
\end{abstract}

\section{Introduction}

In this note, we generalize two known facts about the category $\Delta$, which has the structure of a Reedy category.  The first is that the categories $\Theta_k$, obtained from $\Delta$ via iterations of the $\Theta$ construction, are also Reedy categories.  The second is that, on the category of simplicial presheaves on $\Delta$, or functors $\Delta^{op} \rightarrow \SSets$, the Reedy and injective model structures agree.

For the first generalization, we use the notion of multi-Reedy category to prove that for any Reedy category $\mathcal C$, we get that $\Theta \mathcal C$ is also a Reedy category.  For the second, we give a sufficient condition for the Reedy and injective model structures to coincide; such a Reedy category we call \emph{elegant}.

A Reedy category is defined by two subcategories, the direct and inverse subcategories, and a degree function.  (A precise definition is given in Section \ref{reedy}.)   A consequence of the results of this paper is that the Reedy structure on $\Theta_k$ is characterized by:
\begin{enumerate}
\item A map $\alpha\colon \theta \rightarrow \theta'$ is in $\Theta_k^-$ if
  and only if $F\alpha\colon F\theta\rightarrow F\theta'$ is an epimorphism in
  $\Psh(\Theta_k)$.
\item A map $\alpha\colon \theta\rightarrow \theta'$ is in $\Theta_k^+$ if and
  only if $F\alpha\colon F\theta \rightarrow F\theta'$ is a monomorphism in
  $\Psh(\Theta_k)$.
\item There is a degree function $\deg\colon \ob (\Theta_k)\rightarrow \N$, defined
  inductively by
\[ \deg([m](\theta_1,\dots,\theta_m)) = m+\sum_{i=1}^m \deg(\theta_i). \]
\end{enumerate}
Here, $\Psh(\Theta_k)$ denotes the category of presheaves on $\Theta_k$ and $F$ denotes the Yoneda functor.  In itself, this result is not new; $\Theta_k$ was shown to be a Reedy category by Berger \cite{berger}.

\begin{term}
We note two differences in terms from other work.  First, by ``multicategory" we mean a generalization of a category in which a function has a single input but possibly multiple (or no) outputs.  This notion is dual to the usual definition of multicategory, in which a function has multiple inputs but a single output, equivalently defined as a colored operad.  Perhaps the structure we use would better be called a co-multicategory, but we do not because it would further complicate already cumbersome terminology.

Second, some of the ideas in this work are related to similar ones used by Berger and Moerdijk in \cite{bm}.  For example, their definition of EZ-category is more general than ours, in that some of their examples fit into their framework of generalized Reedy categories.
\end{term}

\section{Reedy categories and multi-Reedy categories} \label{reedy}

\subsection{Presheaf categories}

Given a small category $\mathcal C$, we write $\Psh(\mathcal C)$ for the category of functors $\mathcal C^{op} \rightarrow \Set$.   We write $\Psh(\mathcal C, \mathcal M)$ for
the category of functors $\mathcal C^{op} \rightarrow \mathcal M$, where $\mathcal M$ is any category, and $F_\mathcal C\colon \mathcal C\rightarrow \Psh(\mathcal C)$ for the Yoneda functor, defined by
$(F_\mathcal Cc)(d)=\mathcal C(d,c)$.  When clear from the context, we usually
write $F$ for $F_\mathcal C$.

We use the following terminology.
Given an object $c$ of $\mathcal C$ and a presheaf $X \colon \mathcal C \rightarrow \mathcal Set$, a $c$-\emph{point} of $X$ is an element of
the set $X(c)$.  Given an $c$-point $x\in X(c)$, we write
$\bar{x}\colon Fc\rightarrow X$ for the map which classifies the element in $X(c)$.

\subsection{Reedy categories}

Recall that a \emph{Reedy category} is a small category $\mathcal C$
equipped with two \emph{wide} subcategories (i.e., subcategories with
all objects of $\mathcal C$), denoted $\mathcal C^+$ and $\mathcal
C^-$ and called the \emph{direct} and \emph{inverse} subcategories,
respectively, together with a \emph{degree function} $\deg\colon \ob
(\mathcal C)\rightarrow \N$  such that the following hold.
\begin{enumerate}
\item Every morphism $\alpha$ in $\mathcal C$ admits a unique factorization of the form $\alpha=\alpha^+\alpha^-$, where
  $\alpha^+$ is in $\mathcal C^+$ and $\alpha^-$ is in $\mathcal C^-$.
\item For every morphism $\alpha \colon c\rightarrow d$ in $\mathcal C^+$ we have $\deg(c)\leq \deg(d)$, and for every morphism $\alpha\colon c\rightarrow d$ in $\mathcal C^-$, we have $\deg(c)\geq \deg(d)$.  In either case, equality holds if and only if $\alpha$ is an identity map.
\end{enumerate}
Note that, as a consequence, $\mathcal C^+\cap \mathcal C^-$ consists exactly of the identity maps of all the objects, and that identity maps are the only isomorphisms in $\mathcal C$.  Furthermore, for all objects $c$ of $\mathcal C$, the slice categories $\slice{c}{\mathcal C^-}$ and $\slice{\mathcal C^+}{c}$ have finite-dimensional nerve.

\subsection{Multi-Reedy categories}

Associated to a Reedy category is a structure which looks much like that of a multicategory, which has morphisms with one input object but possibly multiple output objects.

Let $\mathcal C$ be a small category.  For any finite sequence of objects $c,d_1,\dots,d_m$ in $\mathcal C$, with $m\geq0$, define
\[ \mathcal C(c;d_1,\dots,d_m) \defeq \mathcal C(c,d_1)\times \cdots \times \mathcal C(c,d_m). \]
This notation also extends to empty sequences; $\mathcal C(c;)$ denotes a one-point set.  We refer to elements $\alpha=(\alpha_s\colon c\rightarrow d_s)_{s=1,\dots,m}$ as \emph{multimorphisms} of $\mathcal C$, and we sometimes use the notation $\alpha\colon c\rightarrow d_1,\dots,d_m$ for such a multimorphism.
Let $\mathcal C(*)$ denote the \emph{symmetric multicategory} whose objects are those of $\mathcal C$, and whose multimorphisms $c\to d_1,\dots,d_s$ are as
indicated above.  Note that $\mathcal C(c;d)=\mathcal C(c,d)$, and that $\mathcal C$ may be viewed as a subcategory of the multicategory $\mathcal C(*)$.

\begin{definition}
A \emph{multi-Reedy category} is a small category $\mathcal C$ equipped with a wide subcategory $\mathcal C^-\subseteq \mathcal C$, and a wide sub-multicategory $\mathcal C^+(*)\subseteq \mathcal C(*)$, together with a function $\deg\colon \ob (\mathcal C) \rightarrow \N$ such that the following hold:
\begin{enumerate}
\item Every multimorphism
\[ \alpha=(\alpha_s\colon c\rightarrow d_s)_{s=1,\dots,m} \]
in $\mathcal C(*)$ admits a unique factorization of the form $\alpha=\alpha^+\alpha^-$, where $\alpha^-\colon c\rightarrow x$ is a morphism in $\mathcal C^-$ and $\alpha^+\colon x\rightarrow d_1,\dots,d_m$ is a multimorphism in $\mathcal C^+(*)$.
\item For every multimorphism $\alpha\colon c\rightarrow d_1,\dots,d_m$ in $\mathcal C^+(*)$ we have
\[ \deg(c)\leq \sum_{i=1}^m \deg(d_i). \]
If $\alpha\colon c\rightarrow d$ is a morphism in $\mathcal C^+= \mathcal C\cap \mathcal C^+(*)$, then $\deg(c)=\deg(d)$ if and only if $\alpha$ is an identity map.  For every morphism $\alpha\colon c\rightarrow d$ in $\mathcal C^-$, we have $\deg(c)\geq \deg(d)$, with equality if and only if $\alpha$ is an identity map.
\end{enumerate}
\end{definition}

Note that for degree reasons, $\mathcal C^+(c;) =\varnothing$ if $\deg(c)>0$, while $\mathcal C^+(c;)$ is non-empty if $\deg(c)=0$.  In particular, if $c$ is any object in $\mathcal C$, there exists
a unique map $\sigma\colon c\rightarrow c_0$ in $\mathcal C^-$ where $c_0$ is an object of degree $0$.

The proof of the following proposition follows from the above constructions.

\begin{prop}
If $\mathcal C$ is a multi-Reedy category, then $\mathcal C$ is a Reedy category with inverse category $\mathcal C^-$, direct category $\mathcal C^+= \mathcal C^+(*)\cap \mathcal C$, and degree
function $\deg$.
\end{prop}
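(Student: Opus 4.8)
The plan is to verify directly the two axioms in the definition of a Reedy category, obtaining everything by restricting the multi-Reedy structure along the inclusion $\mathcal C\hookrightarrow\mathcal C(*)$, i.e.\ by looking only at multimorphisms whose target is a sequence of length one. First one should record a preliminary point: $\mathcal C^+=\mathcal C^+(*)\cap\mathcal C$ is a wide subcategory of $\mathcal C$. It is wide and contains all identities since $\mathcal C^+(*)$ is a wide sub-multicategory and $\mathcal C$ is a subcategory of $\mathcal C(*)$; and it is closed under composition since both $\mathcal C^+(*)$ and $\mathcal C$ are closed under the relevant composition inside $\mathcal C(*)$. The subcategory $\mathcal C^-$ is wide by hypothesis.

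For the factorization axiom, take a morphism $\alpha\colon c\to d$ in $\mathcal C$ and regard it as a multimorphism $c\to d$ with a one-element target. Applying part (1) of the multi-Reedy definition gives a unique factorization $\alpha=\alpha^+\alpha^-$ with $\alpha^-\colon c\to x$ in $\mathcal C^-$ and $\alpha^+\colon x\to d$ a multimorphism in $\mathcal C^+(*)$; because the target sequence has length one, $\alpha^+$ is an ordinary morphism of $\mathcal C$, hence lies in $\mathcal C^+(*)\cap\mathcal C=\mathcal C^+$. This gives a factorization of the required type. For uniqueness within $\mathcal C$, note that any factorization $\alpha=\beta^+\beta^-$ with $\beta^+\in\mathcal C^+$ and $\beta^-\in\mathcal C^-$ is in particular a factorization of the shape appearing in part (1) of the multi-Reedy definition, since $\mathcal C^+\subseteq\mathcal C^+(*)$; so the uniqueness there forces $\beta^\pm=\alpha^\pm$.

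For the degree axiom, apply part (2) of the multi-Reedy definition with a one-element target. A morphism $\alpha\colon c\to d$ in $\mathcal C^+$ is a multimorphism $c\to d$ in $\mathcal C^+(*)$, so $\deg(c)\le\deg(d)$, with equality if and only if $\alpha$ is an identity; this is exactly the clause stated for $\mathcal C^+$ in the multi-Reedy axiom. Likewise the statement that $\deg(c)\ge\deg(d)$ for $\alpha\colon c\to d$ in $\mathcal C^-$, with equality only for identities, is literally part of that axiom. Hence both Reedy axioms hold with the stated data.

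I do not anticipate any real obstacle: the multi-Reedy axioms are by design strictly stronger than the Reedy axioms once one forgets all multimorphisms whose target has length different from one, so the proof is a matter of bookkeeping — identifying that only the $m=1$ instances of axioms (1) and (2) are needed, and observing that the $\alpha^+$-part of the multi-Reedy factorization automatically lands in $\mathcal C$, not merely in $\mathcal C(*)$.
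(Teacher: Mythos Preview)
Your proposal is correct and is precisely the argument the paper has in mind: the paper simply remarks that the proposition ``follows from the above constructions,'' and what you have written is the explicit unwinding of that remark, restricting the multi-Reedy axioms to multimorphisms with a single output. There is nothing more to it, and your bookkeeping (in particular the observation that the $\alpha^+$ produced by the multi-factorization automatically lies in $\mathcal C$ when the target has length one) is exactly the point.
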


\begin{example}
The terminal category $\mathcal C=1$, with the subcategory $\mathcal C^-= \mathcal C$ and the sub-multicategory $\mathcal C^+(*)= \mathcal C(*)$, and degree function $\deg\colon \ob(\mathcal C)\rightarrow \N$, defined by $d(1)=0$, is a multi-Reedy category.
\end{example}

\begin{example}
Let $\Delta$ be the skeletal category of non-empty finite totally ordered sets.  Let $\Delta^-\subseteq \Delta$ be the subcategory of $\Delta$ consisting of surjective maps, and let $\Delta^+(*)\subseteq \Delta(*)$ be the submulticategory consisting of sequences of maps
\[ \alpha_s\colon [m] \rightarrow [n_s], \qquad s=1,\dots,u \]
which form a monomorphic family; i.e., if $\beta,\beta'\colon [k]\rightarrow[m]$ satisfy $\alpha_s\beta=\alpha_s\beta'$ for all $s=1,\dots,u$, then $\beta=\beta'$.  Let $\deg\colon \ob (\Delta)\rightarrow \N$ be defined by $\deg([m])=m$.  Then $\Delta$ is a multi-Reedy category.

Note that the set $\Delta^+([m];[n_1],\dots,[n_r])$ corresponds to the set of non-degenerate $m$-simplices in the prism $\Delta^{n_1}\times\cdots \times \Delta^{n_r}$.
\end{example}

\begin{rem}
Note that the notion of multi-Reedy category, while having the structure of a multicategory, is being associated to an ordinary Reedy category.  This definition can be extended to an arbitrary multicategory, thus giving rise to the notion of a ``Reedy multicategory", as we investigate briefly in Section \ref{Reedymulti}.
\end{rem}

\subsection{The $\Theta$ construction}

Given a small category $\mathcal C$, we define $\Theta \mathcal C$ to be the category
whose objects are $[m](c_1,\dots,c_m)$ where $m\geq0$, and $c_1,\dots,c_m\in
  \ob (\mathcal C)$, and such that morphisms
  \[ [m](c_1,\dots,c_m)\rightarrow [n](d_1,\dots,d_n) \]
correspond to $(\alpha,\{f_i\})$, where $\alpha\colon [m]\rightarrow [n]$ is a morphism of $\Delta$, and for each $i=1,\dots,m$,
\[ f_i\colon c_i\rightarrow d_{\delta(i-1)+1},\cdots,d_{\alpha(i)} \]
is a multimorphism in $\mathcal C(*)$, which is to say $f_i=(f_{ij})$ where $f_{ij}\colon c_i\rightarrow d_j$ for $\delta(i-1)<j\leq \delta(i)$ is a morphism of $\mathcal C$.

\subsection{Multi-Reedy categories preserved under applying $\Theta$}

Let $\mathcal C$ be a multi-Reedy category, and consider the category $\Theta \mathcal C$.  We make the following definitions.
\begin{itemize}
\item Let  $(\Theta \mathcal C)^-\subseteq \Theta \mathcal C$ be the collection of morphisms
\[ f=(\alpha,\{f_i\})\colon [m](c_1,\dots,c_m)\rightarrow [n](d_1,\dots,d_n) \]
such that $\alpha\colon [m]\rightarrow [n]$ is in $\Delta^-$, and for each $i=1,\dots, m$ such that $\alpha(i-1)<\alpha(i)$, the map $f_i\colon c_i\rightarrow d_{\alpha(i)}$ is in $\mathcal C^-$.

\item Let $(\Theta \mathcal C)^+(*)\subseteq (\Theta \mathcal C)(*)$ be the collection of
multimorphisms $f=(f_s)_{s=1,\dots,u}$, where
\[ f_s=(\alpha_s,\{f_{si}\})\colon [m](c_1,\dots,c_m)\rightarrow [n_s](d_{s1},\dots, d_{sn_s}) \]
such that the multimap $\alpha=(\alpha_s) \colon [m]\rightarrow [n_1],\dots,[n_u]$ is in $\Delta^+(*)$ and for each $i$, the multimap
\[ (f_{sij}\colon c_i\rightarrow d_{sj})_{s=1,\dots,u,\; j=\alpha_s(i-1)+1,\dots,\alpha_s(i)} \]
is in $\mathcal C^+(*)$.

\item Let $\deg\colon \ob (\Theta \mathcal C)\rightarrow \N$ be defined by
\[ \deg([m](c_1,\dots,c_m)) = m+\sum_{i=1}^m \deg(c_i). \]
\end{itemize}

\begin{prop} \label{theta}
  Let $\mathcal C$ be a multi-Reedy category.  Then $\Theta \mathcal C$ is a multi-Reedy category, with $(\Theta \mathcal C)^-$, $(\Theta \mathcal C)^+(*)$ and $\deg\colon
  \ob(\Theta \mathcal C)\rightarrow \N$ defined as above.  In particular, $\Theta \mathcal C$ admits the structure of a Reedy category.
\end{prop}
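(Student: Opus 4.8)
The plan is to verify the two axioms of a multi-Reedy category for $\Theta\mathcal C$ directly, using the corresponding axioms for $\mathcal C$ and for $\Delta$ (the latter being the Example above), together with the explicit combinatorics of the $\Theta$ construction. The degree axiom is essentially bookkeeping: given a multimorphism $f = (f_s)$ in $(\Theta\mathcal C)^+(*)$ covering $\alpha = (\alpha_s)\colon [m]\to[n_1],\dots,[n_u]$ in $\Delta^+(*)$, one computes $\sum_s \deg([n_s](d_{s\bullet})) = \sum_s n_s + \sum_s\sum_j \deg(d_{sj})$. For the first summand one invokes the degree inequality for $\Delta^+(*)$, namely $m \le \sum_s n_s$; for the second, one groups the $d_{sj}$ by the index $i$ of the fiber over $[m]$ they belong to and applies the $\mathcal C^+(*)$ degree inequality $\deg(c_i) \le \sum_{s,j} \deg(d_{sj})$ for each $i$. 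Adding these up gives $\deg([m](c_\bullet)) = m + \sum_i \deg(c_i) \le \sum_s n_s + \sum_{s,j}\deg(d_{sj}) = \sum_s \deg([n_s](d_{s\bullet}))$. The equality clause in $(\Theta\mathcal C)^+$ (so $u=1$, a genuine morphism) and in $(\Theta\mathcal C)^-$ follows similarly: equality of degrees forces equality in both the $\Delta$ part and in every $\mathcal C$ part, and the equality clauses there force $\alpha$ and all the $f_i$ to be identities, hence $f$ is an identity.

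The real content is the unique factorization axiom. Given a multimorphism $f = (f_s)\colon [m](c_\bullet) \to [n_1](d_{1\bullet}),\dots,[n_u](d_{u\bullet})$ in $(\Theta\mathcal C)(*)$, I would first factor the underlying multimap $\alpha = (\alpha_s)\colon [m]\to[n_1],\dots,[n_u]$ in $\Delta(*)$ as $\alpha = \alpha^+\alpha^-$ with $\alpha^-\colon[m]\twoheadrightarrow[k]$ in $\Delta^-$ and $\alpha^+\colon[k]\to[n_1],\dots,[n_u]$ in $\Delta^+(*)$; this uses the multi-Reedy structure on $\Delta$. This determines the object $[k](e_1,\dots,e_k)$ through which the factorization of $f$ must pass, once the $e_p$ are found. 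For each $p = 1,\dots,k$, the preimage $(\alpha^-)^{-1}(p)$ is a nonempty interval of indices $i$ in $[m]$; for each such $i$ the data of $f$ provides the collection of maps $f_{sij}\colon c_i \to d_{sj}$ for $j$ in the appropriate range, which we can reorganize (using that $\alpha^-$ is surjective and order-preserving, so the ranges nest correctly) into a multimorphism $c_i \to (\text{all relevant } d_{sj})$ in $\mathcal C(*)$. I then apply the factorization axiom of the multi-Reedy category $\mathcal C$ to this multimorphism to get $c_i \to e_{p}^{(i)}$ in $\mathcal C^-$ followed by a $\mathcal C^+(*)$-multimorphism — but we need a single $e_p$, not one per $i$ in the fiber. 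Here I would use the observation following the multi-Reedy definition: for each $i$ there is a unique $\mathcal C^-$-map $c_i \to c_{i,0}$ to an object of degree $0$, and more relevantly, one shows that within a fiber the $\mathcal C^-$-parts are forced to agree by composing with the $\mathcal C^-$-maps coming from the degeneracies of $\alpha^-$ — concretely, the composite along consecutive indices in a fiber must be an identity on the $\Delta$ side, which pins down the $e_p$ and the maps $c_i\to e_p$ to be a single consistent choice. This is the step I expect to be the main obstacle: checking that the fiberwise factorizations in $\mathcal C$ assemble into a well-defined object $[k](e_1,\dots,e_k)$ with $c_i\to e_p$ in $\mathcal C^-$ forming a morphism $f^-$ in $(\Theta\mathcal C)^-$, and the remaining data forming $f^+$ in $(\Theta\mathcal C)^+(*)$.

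Once the existence of the factorization $f = f^+ f^-$ is established, uniqueness follows from uniqueness at each level: any such factorization induces a factorization of $\alpha$ in $\Delta(*)$, which is unique, hence determines $[k]$ and $\alpha^\pm$; and it induces, for each $i$ in each fiber, a factorization in $\mathcal C(*)$, which is unique, hence determines all the component maps. I would close by invoking the Proposition preceding this statement: a multi-Reedy category is in particular a Reedy category, with $(\Theta\mathcal C)^+ = (\Theta\mathcal C)^+(*)\cap(\Theta\mathcal C)$, giving the final clause. The one subtlety worth flagging explicitly in the write-up is the correct identification, on the $\mathcal C$-side, of which maps $f_{sij}$ get grouped together when passing to the fiber over a point of $[k]$: because $\alpha^-$ is a surjection of ordered sets and each $\alpha^+_s$ is order-preserving, the index set $\{(s,j) : \alpha^-(i-1) < \cdots\}$ reorganizes cleanly, and this is what makes the $\mathcal C^+(*)$ condition in the definition of $(\Theta\mathcal C)^+(*)$ match up with what the factorization produces.
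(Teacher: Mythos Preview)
Your overall strategy is exactly the paper's: factor the underlying $\Delta$-multimorphism first, then factor the $\mathcal C$-data, and verify the degree inequalities by summing the $\Delta$ and $\mathcal C$ contributions.  Two corrections are in order.

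First, the ``main obstacle'' you flag is illusory.  Reread the definition of $(\Theta\mathcal C)^-$: for $g=(\sigma,g_i)$ with $\sigma\in\Delta^-$, the map $g_i\colon c_i\to e_{\sigma(i)}$ is only required (and only defined as an honest $\mathcal C$-morphism) when $\sigma(i-1)<\sigma(i)$; for the other $i$ the target multimorphism is empty.  Since $\sigma$ is a surjection, each $p\in\{1,\dots,k\}$ has \emph{exactly one} such jump index $i$, so there is no ambiguity ``one $e_p$ per $i$ in the fiber''.  Dually, for the non-jump $i$ one has $\alpha_s(i-1)=\delta_s\sigma(i-1)=\delta_s\sigma(i)=\alpha_s(i)$ for every $s$, so the original data $f_{si}$ is already the empty multimorphism and there is nothing to factor.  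You therefore perform exactly one $\mathcal C$-factorization per $p$ (namely of $f_{*i}=(f_{si})_s$ at the unique jump index $i$), and that factorization determines $e_p$.  The mechanism you sketch involving ``$\mathcal C^-$-maps coming from the degeneracies of $\alpha^-$'' is neither needed nor well-formed.

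Second, you omit two preliminary checks that the paper carries out explicitly: that $(\Theta\mathcal C)^-$ is closed under composition (hence a subcategory) and that $(\Theta\mathcal C)^+(*)$ is closed under multi-composition (hence a sub-multicategory).  These are routine but necessary, and should appear before the factorization argument.
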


We assure the reader that the proof is entirely formal; however, we will do our best to obscure the point by presenting a proof full of tedious multiple subscripts.

\begin{proof}
First we observe that $(\Theta \mathcal C)^-$ is closed under composition and contains identity maps; i.e., it is a subcategory of $\Theta \mathcal C$.  Notice that $(\Theta \mathcal C)^-$ contains all identity maps. Suppose we have two morphisms in $(\Theta \mathcal C)^-$ of the form
\[ [m](c_1,\dots,c_m)\xrightarrow{f=(\sigma,f_i)} [n](d_1,\dots,d_n) \xrightarrow{g=(\tau,g_j)} [p](e_1,\dots,e_p). \]
The composite has the form $h=(\tau\sigma, h_i)$, where $h_i$ is defined exactly if $\tau\sigma(i-1)<\tau\sigma(i)$, in which case $h_i=g_{\sigma(i)}f_i\colon c_i\rightarrow e_{\tau\sigma(i)}$.  Since
$\tau\sigma\in \Delta^-$ and $h_i=g_{\sigma(i)}f_i$ is in $\mathcal C^-$, we see that $h$ is in $(\Theta \mathcal C)^-$.

Next we observe that $(\Theta \mathcal C)^+(*)$ is closed under multi-composition and contains identity maps; i.e., it is a sub-multicategory of $(\Theta \mathcal C)(*)$.  Again, note that $(\Theta \mathcal C)^+(*)$ contains all identity maps. Suppose we have a multimorphism $f$ in $(\Theta \mathcal C)^+(*)$ of the form
\[ f=\bigl(f_s\colon [m](c_1,\dots,c_m) \rightarrow [n_s](d_{s1},\dots,d_{sn_s})\bigr)_{s=1,\dots, u} \]
with $f_s=(\delta_s,f_{si})$ where $\delta_s\colon [m]\rightarrow [n_s]$ and $f_{si}=(f_{sij}\colon c_i\rightarrow d_j)_{\delta_s(i-1)<j\leq \delta_s(i)}$, and suppose we have a sequence of multimorphisms $g_1,\dots,g_u$ in $(\Theta \mathcal C)^-(*)$, with each $g_s$ of the form
\[ g_s=\bigl(g_{st}\colon [n_s](d_{s1},\dots,d_{sn_s})\rightarrow [p_{st}](e_{st1},\dots,e_{stp_{st}}) \bigr)_{t=1,\dots,v_s}, \]
where $g_{st}=(\varepsilon_{st}, g_{stj})$, with $\varepsilon_{st}\colon [n_s]\rightarrow [p_{st}]$ and
\[ g_{stj}=(g_{stjk}\colon d_{sj}\rightarrow e_{stk})_{\varepsilon_{st}(j-1)<k\leq \varepsilon_{st}(j)}. \]
The composite multimorphism
\[ h = \bigl(h_{st}\colon [m](c_1,\dots,c_m)\rightarrow [p_{st}](e_{st1},\dots,e_{stp_{st}})\bigr)_{s=1,\dots,u,\; t=1,\dots,v_u} \]
is such that for each $s=1,\dots, u$ and $t=1,\dots,v_s$, the map $h_{st}=(\varepsilon_{st}\delta_s, h_{sti})$ in $\Theta \mathcal C$ is defined so
that the multimap $h_{sti}$ in $\mathcal C(*)$ is given by
\[ h_{sti}=\bigl( h_{stijk}=g_{stjk}f_{sij}\colon c_i\rightarrow e_{stk} \bigr)_{\delta_s(i-1)<j\leq \delta_s(i),\; \varepsilon_{st}(j-1)<k\leq \varepsilon_{st}(j)}. \]
Since $\Delta^+(*)$ is a sub-multicategory of $\Delta(*)$, we get that that $(\varepsilon_{st}\delta_s\colon [n]\rightarrow [p_{st}])_{st}$ is a multimorphism in $\Delta^+(*)$, while since $\mathcal C^+(*)$ is a sub-multicategory of $\mathcal C(*)$, we have that for each $s$, $t$, and $i$, the multimap $h_{sti}$ is in $\mathcal C^+(*)$.  Thus, the multimap $h$ is in $(\Theta \mathcal C)^+(*)$ as desired.

Next, suppose we are given a multimorphism $f=(f_s)_{s=1,\dots,u}$ in $(\Theta \mathcal C)(*)$, where
\[ f_s = (\alpha_s, f_{si})\colon [m](c_1,\dots,c_m) \rightarrow [p_s](e_{s1},\dots,e_{sp_s}). \]
We will show that there is a unique factorization of $f$ into a morphism $g$ of $(\Theta \mathcal C)^-$ followed by a multimorphism $h$ of $(\Theta \mathcal C)^+(*)$. Since $\alpha=(\alpha_s)_{s=1,\dots,u}$ is a multimorphism in $\Delta(*)$ it admits a \emph{unique} factorization $\alpha=\delta\sigma$, where $\sigma\colon [m]\rightarrow [n]$ is in $\Delta^-$
and
\[ \delta=(\delta_s\colon [n]\rightarrow [p_s])_{s=1,\dots,u} \]
is in $\Delta^+(*)$.  Thus, any factorization $f=hg$ of the kind we want must be such that
\[ g=(\sigma,g_i),\qquad g_i\colon c_i\rightarrow d_{\sigma(i)}\; \text{defined when $\sigma(i-1)<\sigma(i)$}, \]
and $h=(h_s)_{s=1,\dots,u}$ such that
\[ h_s=(\delta_s, h_{sj}),\qquad h_{sj}\colon d_j \rightarrow e_{\delta(j-1)+1},\dots, e_{\delta(j)}, \]
and so that for each $i=1,\dots,m$ such that $\sigma(i-1)<\sigma(i)$, the composite of the morphism $g_i$ of $\mathcal C$ with the multimorphism $h_{*\sigma(i)}=(h_{s\sigma(i)})_{s=1,\dots,u}$ of $\mathcal C(*)$ must be equal to the multimorphism $f_{*i}=(f_{si})_{s=1,\dots,m}$ of $\mathcal C(*)$.  In fact, since $\mathcal C$ is a multi-Reedy category, there is a \emph{unique} way to produce a factorization $f_{*i}=h_{*\sigma(i)}g_i$ with the property that $g_i$ is in $\mathcal C^-$ and $h_{*\sigma(i)}$ is in $\mathcal C^+(*)$.

Suppose that $f=(\sigma,f_i)\colon [m](c_1,\dots,c_m)\rightarrow [n](d_1,\dots,d_n)$ is a morphism in $(\Theta \mathcal C)^-$.  Then
\begin{align*}
  \deg([m](c_1,\dots,c_m)) &= m +\sum_{i=1}^m \deg(c_i) \\
&\geq n+\sum_{j=1}^n \deg(d_j) \\
&=\deg([n](d_1,\dots,d_n)).
\end{align*}
The inequality in the second line follows from the fact that $m\geq n$ since $\sigma\in \Delta^-$, and the fact that for each $j=1,\dots,n$, there is exactly one $i$ such that $\sigma(i-1)<j\leq \sigma(i)$, for which the map $f_i\colon c_i\rightarrow d_j$ in $\mathcal  C^-$, whence $\deg(c_i)\geq \deg(d_j)$.

If equality of degrees hold, then we must have $m=n$, whence $\sigma$ is the identity map of $[m]$, and thus we must have $\deg(c_i)=\deg(d_i)$ for all $i=1,\dots,m$, whence each $f_i$ is the identity map of $c_i$.

Suppose that $f=(f_s)_{s=1,\dots,u}$ is a multimorphism in $(\Theta \mathcal C)^+(*)$, where
\[ f_s=(\delta_s,f_{si})\colon [m](c_1,\dots,c_m) \rightarrow [n_s](d_{s1},\dots,d_{sn_s}). \]
Since $(\delta_s)\in \Delta^+(*)$, we have $m\leq \sum_{s=1}^u n_s$. For each $i=1,\dots,m$, the multimorphism
\[ f_{*i*}=\bigl(f_{sij}\colon c_i\rightarrow d_{sj}\bigr)_{s=1,\dots,u,\; j=\delta_s(i-1)+1,\dots, \delta_s(i)} \]
is in $\mathcal C^+(*)$, and thus
\[ \deg(c_i)\leq \sum_{s=1}^u \sum_{j=\delta_s(i-1)+1}^{\delta_s(i)} \deg(d_{sj}). \]   For each $s=1,\dots,u$ and $j=1,\dots,n_s$, there is \emph{at most} one $i$
such that $\delta_s(i-1)<j\leq \delta_s(i)$.  Thus
\[ \sum_{i=1}^m \deg(c_i) \leq  \sum_{i=1}^m \sum_{s=1}^u \sum_{j=\delta_s(i-1)+1}^{\delta_s(i)} \deg(d_{sj}) \leq \sum_{s=1}^u \sum_{j=1}^{n_s} \deg(d_{sj}), \]
and thus
\begin{align*}
  \deg([m](c_1,\dots,c_m)) &= m+\sum_{i=1}^m \deg(c_i) \\
&\leq \sum_{s=1}^u n_s + \sum_{s=1}^u \sum_{j=1}^{n_s} \deg(d_{sj}) \\
&=\sum_{s=1}^u \deg([n_s](d_1,\dots,d_{n_s})).
\end{align*}

If $u=1$ and if equality of degrees holds, then we must have $m=n$, whence $\delta_1$ is the identity map, and then we must have $\deg(c_i)=\deg(d_i)$ for $i=1,\dots,m$, whence each $f_i$ is an
identity map.
\end{proof}

\begin{rem}
The $\Theta$ construction can be applied to an arbitrary multicategory
$\mathcal M$; when the multicategory $\mathcal M= \mathcal C(*)$ for
some category $\mathcal C$, then the construction specializes to the
one we have used.  Given a suitable notion of ``Reedy multicategory",
it seems that the above proof can be generalized to show that $\Theta
\mathcal M$ is a Reedy multicategory whenever $\mathcal M$ is; we
state this result in Section \ref{Reedymulti}.  These ideas generalize
Angeltveit's work on enriched Reedy categories constructed from
operads \cite{ang}.
\end{rem}

\subsection{The direct sub-multicategory of $\Theta \mathcal C$}

We give a criterion which can be useful for identifying the morphisms of $(\Theta \mathcal C)^+$, and more generally the multimorphisms of $(\Theta \mathcal C)^+(*)$.

Given a multimorphism $f=(f_s\colon c\rightarrow d_s)_{s=1,\dots,u}$ in the multicategory $\mathcal C(*)$ associated to a category $\mathcal C$, let $Ff$ denote the
induced map of of $\mathcal C$-presheaves
\[ (Ff_1,\dots,Ff_u) \colon Fc\rightarrow Fd_1\times\cdots\times Fd_u. \]

\begin{prop}
Let $\mathcal C$ be a multi-Reedy category, and suppose that for every $f$ in $\mathcal C^+(*)$, the map $Ff$ is a monomorphism in $\Psh(\mathcal C)$.  Then for every $g$ in $(\Theta \mathcal C)^+(*)$, the map $Fg$ is a monomorphism in $\Psh(\Theta \mathcal C)$.
\end{prop}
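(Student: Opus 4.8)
The plan is to check the monomorphism condition objectwise. Write $g=(g_s)_{s=1,\dots,u}$ with
\[ g_s=(\delta_s,\{g_{sj}\})\colon [m](c_1,\dots,c_m)\longrightarrow [n_s](d_{s1},\dots,d_{sn_s}),\qquad \delta_s\colon[m]\to[n_s]. \]
Since monomorphisms of $\Set$-valued presheaves are detected objectwise, $Fg$ is a monomorphism in $\Psh(\Theta\mathcal C)$ if and only if for every object $[k](b_1,\dots,b_k)$ of $\Theta\mathcal C$ the function
\[ \Theta\mathcal C\bigl([k](b_1,\dots,b_k),[m](c_1,\dots,c_m)\bigr)\longrightarrow \prod_{s=1}^{u}\Theta\mathcal C\bigl([k](b_1,\dots,b_k),[n_s](d_{s1},\dots,d_{sn_s})\bigr),\qquad \phi\mapsto(g_s\phi)_{s}, \]
is injective. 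So I would fix two morphisms $\phi=(\rho,\{\phi_i\})$ and $\psi=(\rho',\{\psi_i\})$ from $[k](b_1,\dots,b_k)$ to $[m](c_1,\dots,c_m)$ with $g_s\phi=g_s\psi$ for every $s$, and show $\phi=\psi$.

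The first step compares the underlying maps of $\Delta$: the composite $g_s\phi$ has underlying $\Delta$-morphism $\delta_s\rho$, so $g_s\phi=g_s\psi$ forces $\delta_s\rho=\delta_s\rho'$ for all $s$. Since $g$ lies in $(\Theta\mathcal C)^+(*)$, the family $(\delta_s)_{s=1,\dots,u}$ is a multimorphism of $\Delta^+(*)$, hence a monomorphic family, and therefore $\rho=\rho'$; write $\rho$ for this common map (if $u=0$ this condition already forces $m=0$ and there is nothing to prove). The second step handles the multimorphism data. For each $i$ with $\rho(i-1)<\rho(i)$ and each $j$ with $\rho(i-1)<j\le\rho(i)$, consider the multimorphism
\[ g_{*j*}=\bigl(g_{sjk}\colon c_j\to d_{sk}\bigr)_{1\le s\le u,\ \delta_s(j-1)<k\le\delta_s(j)}, \]
which by the definition of $(\Theta\mathcal C)^+(*)$ is a multimorphism of $\mathcal C^+(*)$, so by hypothesis $Fg_{*j*}$ is a monomorphism in $\Psh(\mathcal C)$. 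Unwinding the composition rule in $\Theta\mathcal C$ (recalled in the proof of Proposition~\ref{theta}), the equality $g_s\phi=g_s\psi$ amounts — once $\rho=\rho'$ is known — to the system of equations $g_{sjk}\circ\phi_{ij}=g_{sjk}\circ\psi_{ij}$, ranging over all $i$, all $j$ with $\rho(i-1)<j\le\rho(i)$, and all $(s,k)$ as above. For fixed $i$ and $j$ this says exactly that $Fg_{*j*}$, evaluated at the object $b_i$, carries the $b_i$-points $\phi_{ij}$ and $\psi_{ij}$ of $Fc_j$ to the same element; injectivity of $(Fg_{*j*})_{b_i}$ then gives $\phi_{ij}=\psi_{ij}$. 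Since this holds for every $j$ in the range, $\phi_i=\psi_i$ for all $i$ (the cases with $\rho(i-1)=\rho(i)$ being vacuous, as then $\phi_i$ and $\psi_i$ have empty target), whence $\phi=\psi$.

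As the proof of Proposition~\ref{theta} warns, this is essentially formal bookkeeping, and the only two points needing care are: (a) confirming that the composition formula in $\Theta\mathcal C$ really distributes each identity $g_s\phi=g_s\psi$ into precisely the pointwise equations $g_{sjk}\phi_{ij}=g_{sjk}\psi_{ij}$, with the blocks of indices lining up correctly; and (b) the degenerate subcase in which, for some $j$ in the relevant range, $\delta_s(j-1)=\delta_s(j)$ for every $s$, so that $g_{*j*}$ is an empty multimorphism. In that subcase $g_{*j*}\in\mathcal C^+(c_j;)$ forces $\deg(c_j)=0$, and the hypothesis that $Fg_{*j*}$, now a map from $Fc_j$ to the terminal presheaf, is a monomorphism says exactly that $\mathcal C(b_i,c_j)$ has at most one element, which again gives $\phi_{ij}=\psi_{ij}$. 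Neither point is a genuine obstacle; the real work is just keeping the multiple subscripts straight.
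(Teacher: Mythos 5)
Your proposal is correct and follows essentially the same route as the paper's proof: test the monomorphism condition on maps out of an arbitrary object of $\Theta\mathcal C$, recover the $\Delta$-components from the fact that $\Delta^+(*)$ consists of monomorphic families, and then recover the $\mathcal C$-components by applying the hypothesis to the multimorphisms $g_{*j*}\in\mathcal C^+(*)$. The only difference is that you spell out the edge cases ($u=0$, and empty $g_{*j*}$), which the paper leaves implicit in its appeal to the hypothesis on $\mathcal C$.
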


\begin{proof}
Let $g=(g_s)_{s=1,\dots,u}$ be a multimorphism in $(\Theta \mathcal C)^+(*)$, where
\[g_s=(\beta_s,g_{sj})\colon [n](d_1,\dots,d_n) \rightarrow [p_s](e_{s1},\dots,e_{sp_s}).\]
We need to show that if
\[ f,f'\colon [m](c_1,\dots,c_m)\rightarrow [n](d_1,\dots,d_n) \] are maps in $\Theta \mathcal C$ such that $g_sf=g_sf'$ for all $s=1,\dots,u$, then $f=f'$.   Write $f=(\alpha,f_i)$ and $f'=(\alpha',f_i')$.  Then $g_sf=g_sf'$ implies $\beta_s\alpha=\beta_s\alpha'$ for all $s$, whence $\alpha=\alpha'$ since
\[ (F\beta_s)\colon F[n]\rightarrow F[p_1]\times\cdots \times F[p_u] \] is a monomorphism in $\Psh(\Delta)$.  Thus for each $i=1,\dots,m$ and $\alpha(i-1)<j\leq \alpha(i)$ we have $f_i,f_i'\colon c_i\rightarrow d_j$, which satisfy $g_{sj}f_i=g_{sj}f_i'$ for all $s=1,\dots,u$.  By hypothesis on $\mathcal C$, it follows that $f_i=f_i'$.
\end{proof}

\section{Elegant Reedy categories}

In this section, we give sufficient conditions on a Reedy category to ensure that the Reedy and injective model structures agree.  The categories of degeneracies and inclusions considered by Baues in \cite{baues} are similar.

\subsection{Degenerate and non-degenerate points}

Let $\mathcal C$ be a Reedy category, and suppose that $X$ is an object of $\Psh(\mathcal C)$.

\begin{definition}
A $c$-point $x\in X(c)$ is \emph{degenerate} if there exist $\alpha\colon c\rightarrow d$ in $\mathcal C^-$ and $y\in X(d)$ such that
\begin{enumerate}
\item $(X\alpha)(y)=x$, and

\item  $\alpha$ is not an identity map (or equivalently, $\deg(c)>\deg(d)$).
\end{enumerate}
A $c$-point $x\in X(c)$ is \emph{non-degenerate} if it is not degenerate.
\end{definition}

We write $X_\dg(c), X_\nd(c)\subseteq X(c)$ for the subsets of degenerate and non-degenerate $c$-points of $X$, respectively; thus
\[ X(c)=X_\dg(c)\amalg X_\nd(c). \]  If $f\colon X\rightarrow Y$ in $\Psh(\mathcal C)$ is a map, then $f(X_\dg(c))\subseteq Y_\dg(c)$, while $f^{-1}(Y_\nd(c))\subseteq X_\nd(c)$.

\begin{definition} \label{deg}
A $c$-point $x\in X(c)$ is a \emph{degeneracy} of $y\in X(d)$ if there
exists $\alpha\colon c\rightarrow d$ in $\mathcal C^-$ such that
$x=X(\alpha)(y)$.  Thus, every point is a degeneracy of itself;
a point is non-degenerate if and only it is a degeneracy of
\emph{only} itself.
\end{definition}

Because the slice category $\slice{c}{\mathcal C^-}$ is finite dimensional, every point in $X$ is the degeneracy of at least one non-degenerate point.

For an object $c$ in $\mathcal C$, a point $\alpha\in (Fc)(d)$ is non-degenerate if and only if $\alpha\colon c\rightarrow d$ is in $\mathcal C^+$.  \emph{Warning:}  It is not the case that $\alpha\colon c\rightarrow d$ in $\mathcal C^+$ implies that $F\alpha\colon Fc\rightarrow Fd$ is injective.

\subsection{Elegant Reedy categories}

\begin{definition}
A Reedy category $\mathcal C$ is \emph{elegant} if
\begin{enumerate}
\item [(E)] for every presheaf $X$ in $\Psh(\mathcal C)$, every object
  $c$ in $\mathcal C$, and every $c$-point $x\in X(c)$, there exists a
  unique pair $(\sigma\colon c\rightarrow d$ in $\mathcal C^{-}$ and $y\in
  X_\nd(d))$ such that $(X\sigma)(y)=x$.
\end{enumerate}
\end{definition}

In other words, elegant Reedy categories have the feature that every
point of every presheaf is \emph{uniquely} a degeneracy of
\emph{unique} non-degenerate point.  The standard example of an
elegant Reedy category is the category $\Delta$, as we will see in the
next section.

Condition (E) admits the following equivalent reformulation.
\begin{enumerate}
\item [(E')] For every presheaf $X$ in $\Psh(\mathcal C)$ and every object $c$ in $\mathcal C$, the map
  \begin{align*}
    \coprod_{d\in \ob (\mathcal C)} \coprod_{x\in X_\nd(d)} \mathcal C^-(c,d) & \rightarrow X(c), \\
 (d,x,\alpha) & \mapsto (X\alpha)(x)
  \end{align*}
is a bijection.
\end{enumerate}

\subsection{Characterization of elegant Reedy categories}

The material in this section is prefigured in Gabriel-Zisman \cite[\S II.3]{gz}.

\begin{definition}
A \emph{strong pushout} in a category $\mathcal C$ is a commutative square in $\mathcal C$ such that its image under the Yoneda functor $F\colon \mathcal C\rightarrow \Psh(\mathcal C)$ is a pushout square.
\end{definition}

Note that every strong pushout is actually a pushout in $\mathcal C$.

\begin{prop}\label{prop:elegant-iff-sp}
Let $\mathcal C$ be a Reedy category.  Then $\mathcal C$ is elegant if and only if the following property (SP) holds.
\begin{enumerate}
\item [(SP)] Every pair of maps $\sigma_s\colon c\rightarrow d_s$, $s=1,2$, in $\mathcal C^-$, extends to a commutative square in $\mathcal C^-$ which is a strong pushout in $\mathcal C$.  That is, there exist $\tau_s\colon d_s\rightarrow e$ in $\mathcal C^-$ such that $\tau_1\sigma_1=\tau_2\sigma_2$ and such that
\[\xymatrix{ {Fc} \ar[r]^{F\sigma_1} \ar[d]_{F\sigma_2} & {Fd_1} \ar[d]^{F\tau_1} \\
{Fd_2} \ar[r]_{F\tau_2} & {Fe} }\]
in a pushout square in $\Psh(\mathcal C)$.
\end{enumerate}
\end{prop}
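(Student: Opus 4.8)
The plan is to prove the two implications separately. For the implication (SP) $\Rightarrow$ (E), I would fix a presheaf $X$, an object $c$, and a point $x \in X(c)$, and use the reformulation (E'): I must show the map sending $(d, y, \alpha)$ with $y \in X_\nd(d)$ and $\alpha \colon c \to d$ in $\mathcal C^-$ to $(X\alpha)(y)$ is a bijection. Surjectivity is already known (every point is a degeneracy of at least one non-degenerate point, by finite-dimensionality of $\slice{c}{\mathcal C^-}$). For injectivity, suppose $(X\alpha_1)(y_1) = x = (X\alpha_2)(y_2)$ with $y_s \in X_\nd(d_s)$ and $\alpha_s \colon c \to d_s$ in $\mathcal C^-$. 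Apply (SP) to the pair $\alpha_1, \alpha_2$ to get $\tau_s \colon d_s \to e$ in $\mathcal C^-$ with $\tau_1 \alpha_1 = \tau_2 \alpha_2$ forming a pushout square of representables. Since $x$ corresponds to a map $\bar x \colon Fc \to X$ and the two classifying maps $\bar y_1 \circ F\alpha_1$ and $\bar y_2 \circ F\alpha_2$ agree (both equal $\bar x$), the pushout property of the square $Fc \to Fd_1, Fd_2 \to Fe$ yields a unique $z \colon Fe \to X$ with $z \circ F\tau_s = \bar y_s$; that is, a point $z \in X(e)$ with $(X\tau_s)(z) = y_s$. Now here is the key point: since $y_s$ is non-degenerate and $(X\tau_s)(z) = y_s$ with $\tau_s \in \mathcal C^-$, the map $\tau_s$ must be an identity (by the definition of non-degenerate, applied contrapositively: a degeneracy of $z$ along a non-identity map of $\mathcal C^-$ would be degenerate, but actually I need the sharper statement — $y_s$ being a degeneracy of $z$ forces, when $y_s$ is non-degenerate, that ... hmm, let me restate: $y_s = (X\tau_s)(z)$ exhibits $y_s$ as a degeneracy of $z$; by Definition \ref{deg} a non-degenerate point is a degeneracy only of itself, so this would need $z = y_s$ and $d_s = e$; but we need $\tau_s$ itself to be an identity). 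The cleanest route: $z$ is itself a degeneracy of some non-degenerate $w$, so $y_s$ is a degeneracy of $w$ along a composite in $\mathcal C^-$; non-degeneracy of $y_s$ forces that composite, hence $\tau_s$, to be an identity, so $d_1 = e = d_2$, $\tau_s = \mathrm{id}$, $z = y_1 = y_2$, and then $\alpha_1 = \tau_1 \alpha_1 = \tau_2 \alpha_2 = \alpha_2$. This gives injectivity, hence (E'), hence (E).

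For the converse (E) $\Rightarrow$ (SP), I would fix $\sigma_s \colon c \to d_s$, $s = 1,2$, in $\mathcal C^-$. First form the pushout of $F\sigma_1, F\sigma_2$ in $\Psh(\mathcal C)$; call it $P$, with cone maps $j_s \colon Fd_s \to P$. I need to realize $P$ as $Fe$ for some $e \in \mathcal C$ with $j_s = F\tau_s$, $\tau_s \in \mathcal C^-$. The idea: $P$ is a quotient of $Fd_1 \amalg Fd_2$, and its value at an object $b$ is a pushout of sets $Fd_1(b) \amalg_{Fc(b)} Fd_2(b)$ — but one must be careful, since $F\sigma_s$ need not be injective, so this set-level pushout is a quotient, not a disjoint-union-over-intersection. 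I would instead look at where the identity of $d_s$ goes: the element $j_s(\mathrm{id}_{d_s}) \in P(d_s)$. Using (E') for the presheaf $P$ at the object $d_s$, write $j_s(\mathrm{id}_{d_s})$ uniquely as a degeneracy along some $\tau_s \colon d_s \to e_s$ in $\mathcal C^-$ of a non-degenerate point $w_s \in P_\nd(e_s)$. The commutativity $j_1 F\sigma_1 = j_2 F\sigma_2$ should force (again via uniqueness in (E') applied at $c$) that $e_1 = e_2 =: e$ and $w_1 = w_2 =: w$, with $\tau_1 \sigma_1 = \tau_2 \sigma_2$. Then I claim the induced map $\bar w \colon Fe \to P$ is an isomorphism: to check this it suffices to check it is a bijection on $b$-points for every $b$, which amounts to showing every element of $P(b)$ is uniquely of the form $(P\gamma)(w)$ for $\gamma \colon b \to e$ — and here I would use that $P(b)$ is the set-pushout of $Fd_1(b), Fd_2(b)$ over $Fc(b)$ together with (E') applied to $P$ at $b$, reducing everything to the non-degenerate point $w$. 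Transporting the pushout square along this isomorphism $Fe \cong P$ gives the desired strong pushout square in $\mathcal C^-$.

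The main obstacle I anticipate is the (E) $\Rightarrow$ (SP) direction, specifically verifying that the natural map $\bar w \colon Fe \to P$ is an isomorphism of presheaves. The subtlety is that $P(b)$, being a pushout of sets along possibly-noninjective maps, does not decompose cleanly, so I cannot just "chase elements" naively; I must leverage (E') for $P$ to understand $P(b)$ via its non-degenerate points and then argue that $w$ is (up to the $\mathcal C^-$-action) the only relevant non-degenerate point of $P$ because $P$ is generated under colimits by $Fd_1, Fd_2$, whose identity points both degenerate onto $w$. Handling the uniqueness/coherence bookkeeping — that the two applications of (E') at $d_1$ and $d_2$ genuinely glue to give a single $e$, $w$, and that $\tau_1, \tau_2$ land in $\mathcal C^-$ and satisfy $\tau_1\sigma_1 = \tau_2\sigma_2$ — is the delicate part; the representability claim itself then follows formally. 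I would also record, as in the warning preceding this subsection, that one must not assume $F\sigma_s$ injective anywhere in the argument.
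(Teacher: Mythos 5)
Your (SP)~$\Rightarrow$~(E) direction is essentially the paper's argument, and the wrinkle you worry about there is a non-issue: if $\tau_s$ were not an identity, the equation $y_s=(X\tau_s)(z)$ would exhibit $y_s$ as degenerate by the very definition of degeneracy, so $\tau_s=1$, $e=d_s$, $z=y_s$, and $\sigma_1=\tau_1\sigma_1=\tau_2\sigma_2=\sigma_2$. No detour through a further non-degenerate $w$ is needed.

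The gap is in (E)~$\Rightarrow$~(SP). You correctly form the pushout $P$, produce $\tau_s\colon d_s\rightarrow e$ and the non-degenerate $w\in P_\nd(e)$ with $(P\tau_s)(w)=j_s(1_{d_s})$, and use uniqueness in (E) at $c$ to get $e_1=e_2$ and $\tau_1\sigma_1=\tau_2\sigma_2$; all of this matches the paper. But the last step --- that $\bar w\colon Fe\rightarrow P$ is an isomorphism --- is exactly the point, and your sketch does not establish it. Surjectivity on $b$-points is easy (every $b$-point of $P$ is $j_s(\gamma)=\bar w(\tau_s\gamma)$), but injectivity of $\bar w$ on $b$-points is as hard as the original claim, and ``$w$ is the only relevant non-degenerate point of $P$'' is not justified: $P$ has non-degenerate points at many objects $b$, and (E$'$) applied to $P$ at $b$ does not by itself collapse them onto $w$. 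The paper's missing ingredient is a \emph{retraction}: since $\tau_1\sigma_1=\tau_2\sigma_2$, the universal property of the pushout $P$ yields $f\colon P\rightarrow Fe$ with $fj_s=F\tau_s$, and one checks $\bar w f=1_P$; hence $P$ is a retract of $Fe$, and Lemma \ref{lemma:reedy-split-idempotent} (idempotents split in a Reedy category, so retracts of representables are representable) gives representability of $P$. (If you want $\bar w$ itself to be an isomorphism, note that the resulting idempotent $f\bar w=F\epsilon$ factors as $\sigma\rho$ with $\rho\in\mathcal C^-$, and $(P\rho)\bigl((P\sigma)(w)\bigr)=w$ together with non-degeneracy of $w$ forces $\rho$, hence $\epsilon$, to be an identity.) You should also record why the resulting structure maps $d_s\rightarrow e$ lie in $\mathcal C^-$. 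Without the retraction and the idempotent-splitting lemma, your argument does not close.
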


We note some immediate consequences of property (SP).
\begin{enumerate}
\item  In a Reedy category, all isomorphisms are identity maps, and thus colimits are \emph{unique up to identity} if they exist.  Thus, the strong pushout guaranteed by property (SP) is unique up to identity.

\item If $\sigma\colon c\rightarrow d$ is in $\mathcal C^-$, then $F\sigma\colon Fc\rightarrow Fd$ is a \emph{surjective} map of presheaves.  That is,
\[ \colim(Fd\xla{F\sigma} Fc \xrightarrow{F\sigma}Fd) \xrightarrow{(F1_d,F1_d)} Fd \]
is an isomorphism. By condition (SP), there are maps $\tau_s\colon j\rightarrow k$ for $s=1,2$ such that $\tau_1\sigma=\tau_2\sigma$ fitting into a strong pushout square.  Then there is a unique $\gamma\colon e\rightarrow d$ in $\mathcal C$ making the diagram
\[\xymatrix{ {Fc} \ar[r]^{F\sigma} \ar[d]_{F\sigma} & {Fd} \ar[d]_{F\tau_1} \ar[ddr]^{F1_d} \\
{Fd} \ar[r]^{F\tau_2} \ar[rrd]_{F1_d} & {Fe} \ar@{.>}[dr]|{F\gamma} \\
&& {Fd} }\]
commute. Since $\gamma\tau_s=1_d$ and $\tau_s\in \mathcal C^-$ for $s=1,2$, we must have that $\gamma$ is an identity map, since $\mathcal C$ is a Reedy category.

\item The preceding remark implies that each $\sigma\colon c\rightarrow d$ in $\mathcal C^-$ is a \emph{split epimorphism}.  That is, $\sigma$ is a map such that there exists $\delta\colon d\rightarrow
c$ in $\mathcal C$ such that $\sigma\delta=1_d$. Furthermore, a morphism $\alpha\colon c\rightarrow d$ in $\mathcal C$ is in $\mathcal C^-$ if and only if $F\alpha$ is surjective; to prove the if part, note that any split epimorphism in $\mathcal C$ is necessarily in $\mathcal C^-$.

\item The slice category $\slice{c}{\mathcal C^-}$ is cocomplete.  Since all morphisms are epimorphisms, $\slice{c}{\mathcal C^-}$ is a poset.  It has an initial object $1_c\colon c\rightarrow c$, and has finite coproducts by property (SP), and so has a finite colimits.  Since $\slice{c}{\mathcal C^-}$ has finite dimensional nerve, it trivially has all filtered colimits.
\end{enumerate}

To prove the proposition, we use the following lemma, suggested by the
referee.
\begin{lemma}\label{lemma:reedy-split-idempotent}
All idempotents in a Reedy category are split, and thus all retracts
of representable presheaves are representable.  In particular, if $\mathcal C$ is
a Reedy category and
$\epsilon\colon c\rightarrow c$ is such that $\epsilon\epsilon=\epsilon$, then
there exists
$\sigma\colon d\rightarrow c$ in  $\mathcal C^+$ and $\rho\colon c\rightarrow d$ in $\mathcal C^-$ such that
$1_d=\rho\sigma$ and $\epsilon=\sigma\rho$; if $X$ is a retract of $Fc$ with
associated idempotent $F\epsilon$, then $X\cong Fd$.
\end{lemma}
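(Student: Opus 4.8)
The plan is to reduce everything to the single claim that, if $\epsilon\colon c\rightarrow c$ satisfies $\epsilon\epsilon=\epsilon$ and $\epsilon=\epsilon^+\epsilon^-$ is its Reedy factorization (so $\epsilon^-\colon c\rightarrow d$ lies in $\mathcal C^-$ and $\epsilon^+\colon d\rightarrow c$ lies in $\mathcal C^+$), then $\epsilon^-\epsilon^+=1_d$. Granting this, one sets $\rho=\epsilon^-$ and $\sigma=\epsilon^+$, so that $\rho\sigma=1_d$ and $\sigma\rho=\epsilon$, which is the first assertion. For the statement about presheaves I would use that the Yoneda functor $F$ is fully faithful, so every idempotent endomorphism of $Fc$ has the form $F\epsilon$ for a unique idempotent $\epsilon\colon c\rightarrow c$; then given $r\colon Fc\rightarrow X$ and $s\colon X\rightarrow Fc$ with $rs=1_X$, one takes $\epsilon$ with $F\epsilon=sr$ and $d,\sigma,\rho$ as above, and checks that $F\rho\circ s\colon X\rightarrow Fd$ and $r\circ F\sigma\colon Fd\rightarrow X$ are mutually inverse: indeed $(F\rho\circ s)(r\circ F\sigma)=F\rho\circ(sr)\circ F\sigma=F(\rho\sigma)\circ F(\rho\sigma)=1_{Fd}$ and $(r\circ F\sigma)(F\rho\circ s)=r\circ(sr)\circ s=(rs)(rs)=1_X$, whence $X\cong Fd$.

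To prove the claim, I would set $g=\epsilon^-\epsilon^+\colon d\rightarrow d$ and first note, using $\epsilon\epsilon=\epsilon$, that $\epsilon^+g\epsilon^-=(\epsilon^+\epsilon^-)(\epsilon^+\epsilon^-)=\epsilon\epsilon=\epsilon$. Next take the Reedy factorization $g=\iota\pi$ with $\pi\colon d\rightarrow d_1$ in $\mathcal C^-$ and $\iota\colon d_1\rightarrow d$ in $\mathcal C^+$. Then $\pi\epsilon^-\colon c\rightarrow d_1$ lies in $\mathcal C^-$, $\epsilon^+\iota\colon d_1\rightarrow c$ lies in $\mathcal C^+$, and $(\epsilon^+\iota)(\pi\epsilon^-)=\epsilon^+g\epsilon^-=\epsilon$, so $\epsilon$ is factored as a map of $\mathcal C^+$ following a map of $\mathcal C^-$, now with intermediate object $d_1$. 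By uniqueness of the Reedy factorization of $\epsilon$ (which pins down the intermediate object, since the only isomorphisms in a Reedy category are identities) we get $d_1=d$; but then $\pi\colon d\rightarrow d$ lies in $\mathcal C^-$ and hence is an identity by the degree axiom, and likewise $\iota=1_d$, so $g=\iota\pi=1_d$.

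The argument is purely formal. The one place calling for a little care is the appeal to uniqueness of the Reedy factorization of $\epsilon$ to conclude $d_1=d$ (after which the degree axiom instantly forces $\pi$ and $\iota$ to be identities); the presheaf statement then drops out from faithfulness of $F$ and the two displayed identities. I do not expect any genuine obstacle.
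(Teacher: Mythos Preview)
Your argument is correct and is essentially the same as the paper's: factor $\epsilon=\sigma\rho$ with $\rho\in\mathcal C^-$, $\sigma\in\mathcal C^+$, then Reedy-factor $\rho\sigma=\sigma'\rho'$ and use uniqueness of the factorization of $\epsilon=\sigma\sigma'\rho'\rho$ to force $\sigma'$ and $\rho'$ to be identities. The paper is terser about the presheaf statement (``follows easily''), whereas you write out the inverse isomorphisms explicitly, but the content is the same.
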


\begin{proof}
First, we factor $\epsilon=\sigma\rho$ with $\rho\colon c\rightarrow d$ in $\mathcal C^-$ and
$\sigma\colon d\rightarrow c$ in $\mathcal C^+$.  Then we
factor $\rho\sigma=\sigma'\rho'$ with $\rho'$ in $\mathcal C^-$ and $\sigma'$ in
$\mathcal C^+$.  By the unique factorization property of Reedy categories, the identity
$\sigma\rho=\sigma\rho\sigma\rho = \sigma\sigma'\rho'\rho$ implies that
$\rho'$ and $\sigma'$ are identity maps, whence $\rho\sigma=1_d$ as
desired.  The statement about retracts follows easily.
\end{proof}

\begin{proof}[Proof of Proposition \ref{prop:elegant-iff-sp}]
Suppose $\mathcal C$ is a Reedy category which satisfies property
(SP).
To prove (E), suppose that $x\in X(c)$, and suppose that we are given
$\sigma_s\colon c\rightarrow d_s$ in $\mathcal C^-$ and $y_s\in X_\nd(d_s)$
for $s=1,2$, such that $(X\sigma_s)(y_s)=x$.  Then there is a unique
dotted arrow $\bar{z}$ making the diagram
\[\xymatrix{ & {Fd_1} \ar[dr]_{F\tau_1} \ar@/^1ex/[drr]^{\bar{y}_1} \\
{Fc} \ar[ur]^{F\sigma_1} \ar[dr]_{F\sigma_2} && {Fe}
\ar@{.>}[r]|{\bar{z}} & {X} \\
& {Fd_2} \ar[ur]^{F\tau_2} \ar@/_1ex/[urr]_{\bar{y}_2} }\]
commute, where the pair of maps $\tau_s\colon d_s\rightarrow e$ in $\mathcal
C^-$ forms the strong pushout in $\mathcal C$ of the original pair of
maps $\sigma_s$.  But since $y_1$ and $y_2$ are non-degenerate, we
must have that $\tau_1$ and $\tau_2$ are identity maps, whence
$y_1=y_2$ and $\sigma_1=\sigma_2$.

Next we prove that if $\mathcal C$ is an elegant Reedy category, then
property (SP) holds.  Suppose that $\sigma_s\colon c\rightarrow d_s$, $s=1,2$,
is a pair of maps in $\mathcal C^-$.  Let $X$ denote the pushout of
$F\sigma_1$ and $F\sigma_2$ in $\Psh(\mathcal C)$, with maps
$\bar{y}_s\colon Fd_s\rightarrow X$ such that
$\bar{y}_1(F\sigma_1)=\bar{y}_2(F\sigma_2)$.  We write $y_s\in X(d_s)$
for the point corresponding to the map $\bar{y_s}$.  Recalling the
fact given immediately after Definition \ref{deg}, there exist
$\tau_s\colon d_s\rightarrow e_s$ in $\mathcal C^-$ and $z_s\in X_\nd(e_s)$
for $s=1,2$ such that $(X\tau_s)(z_s)=y_s$.  Since
$(X\tau_1\sigma_1)(z_1)=(X\tau_2\sigma_2)(z_2)$, the uniqueness
statement of (E) implies that $e_1=e_2$, $z_1=z_2$, and
$\tau_1\sigma_1=\tau_2\sigma_2$.  Write $z=z_1$ and $e=e_1$, and
consider the commutative diagram
\[\xymatrix{ & {Fd_1} \ar[drr]|{F\tau_1} \ar[dr]_{\bar{y}_1}
  \ar@/^1ex/[drrr]^{\bar{y}_1} \\
{Fc} \ar[ur]^{F\sigma_1} \ar[dr]_{F\sigma_2} && {X} \ar@{.>}[r]|-{f} &
{Fe} \ar[r]|{\bar{z}} & {X} \\
& {Fd_2} \ar[urr]|{F\tau_2} \ar[ur]^{\bar{y}_2}
\ar@/_1ex/[urrr]_{\bar{y}_2} }\]
The arrow $f$ exists because $X$ is a pushout, and we have $\bar{z} f
= 1_X$.  Therefore $X$ is a retract of $Fe$, and hence is
representable by \eqref{lemma:reedy-split-idempotent}, and thus
provides the desired strong pushout.
\end{proof}

As a consequence, we obtain the following.
\begin{prop}\label{prop:property-e1}
If $\mathcal C$ is an elegant Reedy category, and if $f\colon X\rightarrow Y$ is a
monomorphism between presheaves of sets on $\mathcal C$, then
$X_\nd(c)=f^{-1}(Y_\nd(c))$ as subsets of $X(c)$.
\end{prop}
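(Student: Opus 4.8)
The plan is to prove the two inclusions separately. The inclusion $f^{-1}(Y_\nd(c)) \subseteq X_\nd(c)$ needs nothing new: immediately after the definition of degenerate points it is recorded that any map of presheaves $f\colon X\to Y$ satisfies $f(X_\dg(c))\subseteq Y_\dg(c)$, equivalently $f^{-1}(Y_\nd(c))\subseteq X_\nd(c)$, and this holds over an arbitrary Reedy category, elegant or not. So the real content, and the only place where elegance will be used, is the reverse inclusion $X_\nd(c)\subseteq f^{-1}(Y_\nd(c))$; equivalently, I must show that if $x\in X(c)$ is non-degenerate then $f_c(x)\in Y(c)$ is non-degenerate.

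I would argue this by contradiction. Suppose $x\in X_\nd(c)$ but $f_c(x)$ is degenerate. By the definition of degeneracy there is a non-identity map $\tau\colon c\to e$ in $\mathcal C^-$ and a point $z\in Y(e)$ with $(Y\tau)(z)=f_c(x)$. The key input is that, since $\mathcal C$ is elegant, it satisfies property (SP) by Proposition \ref{prop:elegant-iff-sp}, and then consequence (3) of (SP) tells us that every morphism of $\mathcal C^-$ is a split epimorphism; in particular $\tau$ has a section $\delta\colon e\to c$ with $\tau\delta=1_e$. Now I would transport $x$ backwards along $\delta$: set $w := (X\delta)(x)\in X(e)$. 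Naturality of $f$ together with $\tau\delta=1_e$ gives $f_e(w)=(Y\delta)(f_c(x))=(Y\delta)(Y\tau)(z)=z$, and applying naturality once more along $\tau$ gives $f_c((X\tau)(w))=(Y\tau)(f_e(w))=(Y\tau)(z)=f_c(x)$. Since a monomorphism of presheaves is injective at every object, $f_c$ is injective, so $x=(X\tau)(w)$. But $\tau$ is a non-identity morphism of $\mathcal C^-$, so this exhibits $x$ as a degenerate point of $X$, contradicting the choice of $x$.

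I do not expect a genuine obstacle here: once one has the fact that morphisms of $\mathcal C^-$ split — which is exactly consequence (3) of (SP), already established in the text — the argument is short and entirely formal. The only things to watch are the bookkeeping of variances ($X$ and $Y$ are contravariant, so $\tau\colon c\to e$ induces $X\tau\colon X(e)\to X(c)$ and the section must be applied in the correct order) and the observation that one never needs $z$ or $w$ to be non-degenerate, so the full strength of condition (E) is not used for this direction, only the bare notion of degeneracy. If one preferred to avoid citing consequence (3), one could instead form the strong pushout of $\tau$ with itself guaranteed by (SP) and read off a section from it, but invoking (3) is cleaner.
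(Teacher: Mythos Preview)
Your proof is correct and follows essentially the same approach as the paper's: both argue the nontrivial inclusion by contradiction, using that morphisms in $\mathcal C^-$ induce epimorphisms of representables to produce a preimage of $x$ along a non-identity inverse map. The only cosmetic difference is that the paper invokes consequence~(2) of (SP) and phrases the construction as a diagonal lift in the square $Fc\to X\to Y\leftarrow Fd\leftarrow Fc$, whereas you cite consequence~(3) and build the lift by hand from an explicit section of $\tau$; these amount to the same thing.
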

\begin{proof}
It is clear that $f^{-1}(Y_\nd(c))\subseteq X_\nd(c)$ from the
definition of non-degeneracy.    To show that $X_\nd(c)\subseteq
f^{-1}(Y_\nd(c))$, note that if $x\in X_\nd(c)$ but $f(x)\in Y(c)$ is
degenerate, then there exists $\alpha\colon c\rightarrow d$ in $\mathcal{C}^-$
fitting into a commutative  square
\[\xymatrix{
{Fc} \ar[r]^{\bar{x}} \ar[d]_{F\alpha}
& {X} \ar[d]^{f(c)}
\\
{Fd} \ar[r]_{\bar{y}} \ar@{.>}[ur]
& {Y}
}\]
As noted in remark (2) after \eqref{prop:elegant-iff-sp} above,
property (SP) implies that if $\sigma\colon
c\rightarrow d$ is in $\mathcal{C}^-$, then $F\sigma$ is an epimorphism of
presheaves.  Since $f(c)$ is a monomorphism there must exist a dotted
arrow making the diagram commute, contradicting the hypothesis that
$x$ is non-degenerate.
\end{proof}

\subsection{Equivalence of Reedy and injective model structures}

Let $\mathcal C$ be a Reedy category. Given a presheaf $X$ in
$\Psh(\mathcal C, \mathcal M)$ on $\mathcal C$ taking values in some
cocomplete category $\mathcal M$, for each object $c$ in $\mathcal C$
the \emph{latching object} at $c$ is an object $L_cX$ of $\mathcal M$
together with a map $p_c=p_c^X\colon L_cX\rightarrow X(c)$, defined by
\[ L_cX \defeq \colim_{(\alpha\colon c\rightarrow d) \in \partial\slice{c}{\mathcal C}} X(d) \xrightarrow{(X\alpha)} X(c), \]
where $\partial\slice{c}{\mathcal C}$ denotes the full subcategory of
the slice category $\slice{c}{\mathcal C}$ whose objects are morphisms
$\alpha\colon c\rightarrow d$ which are not in $\mathcal C^+$.  It is
straightforward to show that the inclusion functor
$\partial\slice{c}{\mathcal C^-}\rightarrow \partial\slice{c}{\mathcal C}$ is
final, so that the natural map
\[ L_cX \rightarrow \colim_{(\alpha\colon c\rightarrow d) \in \partial\slice{c}{\mathcal C^-}} X(d) \]
is an isomorphism.

In the case that $X$ is a set-valued presheaf (i.e., an object of
$\Psh(\mathcal{C})$), it is clear that for  each object $c$ in
$\mathcal C$ the
map $p_c$ factors through a surjection $q_c=q_c^X\colon L_cX \rightarrow
X_\dg(c)$.

\begin{prop}\label{prop:elegant-means-latching-equals-degenerate}
Let $\mathcal{C}$ be an elegant Reedy category.
Then for each set-valued presheaf $X$ on $\mathcal{C}$ and each
  object $c$ of $\mathcal{C}$, the
  map $q_c^X\colon L_cX\rightarrow X_\dg(c)$ is a bijection.
\end{prop}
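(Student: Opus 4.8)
The plan is to reduce to injectivity of $q_c^X$, since the text has already observed that $q_c^X$ is surjective. First I would recall the explicit form of the colimit defining $L_cX$: by the isomorphism $L_cX\cong\colim_{(\alpha\colon c\to d)\in\partial\slice{c}{\mathcal C^-}}X(d)$ noted just above, in which the objects of $\partial\slice{c}{\mathcal C^-}$ are exactly the non-identity maps $\alpha\colon c\to d$ of $\mathcal C^-$, a morphism $(\alpha\colon c\to d)\to(\gamma\colon c\to e)$ is a map $\rho\in\mathcal C^-$ with $\rho\alpha=\gamma$, and the cocone structure maps are the functions $X(\alpha)\colon X(d)\to X(c)$, each of which has image in $X_\dg(c)$ because $\alpha$ is not an identity. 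So every element of $L_cX$ is represented by a pair $(\alpha\colon c\to d,\,w\in X(d))$ with $\alpha$ a non-identity in $\mathcal C^-$; writing $\langle\alpha,w\rangle$ for its class, we have $q_c^X\langle\alpha,w\rangle=(X\alpha)(w)$, and for each morphism $\rho\colon(\alpha)\to(\gamma)$ in $\partial\slice{c}{\mathcal C^-}$ and each $w'\in X(e)$ the cocone relation reads $\langle\gamma,w'\rangle=\langle\alpha,(X\rho)(w')\rangle$.

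For injectivity, I would take two classes $\langle\alpha_1,w_1\rangle,\langle\alpha_2,w_2\rangle$ with a common image $(X\alpha_1)(w_1)=(X\alpha_2)(w_2)=x$ and use property (E) to pass to a canonical representative of each. Applying (E) to the $d_s$-point $w_s$ yields the unique pair $(\rho_s\colon d_s\to e_s\text{ in }\mathcal C^-,\ z_s\in X_\nd(e_s))$ with $(X\rho_s)(z_s)=w_s$. Then $(X(\rho_s\alpha_s))(z_s)=x$ with $\rho_s\alpha_s\in\mathcal C^-$ and $z_s$ non-degenerate, so $(\rho_s\alpha_s,z_s)$ realizes $x$ as a degeneracy of a non-degenerate point; the \emph{uniqueness} clause of (E) applied to $x\in X(c)$ then forces the data for $s=1$ and $s=2$ to agree, so $e_1=e_2=:e$, $z_1=z_2=:z$, and $\rho_1\alpha_1=\rho_2\alpha_2=:\gamma$. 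Since $\alpha_s$ strictly lowers degree we get $\deg(e)\le\deg(d_s)<\deg(c)$, so $\gamma$ is a non-identity map of $\mathcal C^-$ and $\langle\gamma,z\rangle$ is a legitimate element of $L_cX$; moreover each $\rho_s$ is a morphism $(\alpha_s)\to(\gamma)$ in $\partial\slice{c}{\mathcal C^-}$, whence $\langle\alpha_s,w_s\rangle=\langle\alpha_s,(X\rho_s)(z)\rangle=\langle\gamma,z\rangle$ by the cocone relation, and therefore $\langle\alpha_1,w_1\rangle=\langle\alpha_2,w_2\rangle$.

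I do not anticipate any real obstacle: the content is carried entirely by the uniqueness part of (E), and none of the deeper facts about elegant Reedy categories (property (SP), Lemma \ref{lemma:reedy-split-idempotent}) are needed here. The only thing demanding care is the purely formal bookkeeping of the slice-category colimit — getting the variance right so that the morphisms $\rho_s\colon(\alpha_s)\to(\gamma)$ really do identify the classes, and checking that $\gamma$ is not an identity, so that $\langle\gamma,z\rangle$ names an object of $\partial\slice{c}{\mathcal C^-}$ rather than the excluded initial object $1_c$ of $\slice{c}{\mathcal C^-}$. (The argument also makes visible, on the latching-object side, the decomposition of $X_\dg(c)$ supplied by (E').)
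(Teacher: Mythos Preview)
Your argument is correct and follows essentially the same route as the paper: after noting surjectivity, you show injectivity by producing, for each $x\in X_\dg(c)$, a canonical representative $\langle\gamma,z\rangle$ (the unique nondegenerate point furnished by (E)) to which every representative in the fiber is linked by a single morphism of $\partial\slice{c}{\mathcal C^-}$. The paper packages the same idea by identifying $q_c^{-1}(x)$ with $\pi_0$ of a category $\mathcal F_x$ and observing that (E) gives $\mathcal F_x$ an initial (really terminal, in the paper's conventions) object; your explicit ``any two classes agree with $\langle\gamma,z\rangle$'' is exactly the statement that this object is connected to everything.
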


\begin{proof}
We have already noticed that $q_c^X$ is surjective, so it suffices to
prove injectivity.  Given $x\in X_\dg(c)$, observe that the preimage
of $x$ in $L_cX$ may be identified with the colimit of the functor
$F_x\colon \partial\slice{c}{\mathcal C^-}\rightarrow \Set$ which sends $\alpha\colon
c\rightarrow d$ in $\partial\slice{c}{\mathcal{C}^-}$ to the set of $y\in
X(d)$ such that $X(\alpha)(y)=x$.  The colimit of $F_x$ is thus isomorphic
to the set of path components of the category $\mathcal{F}_x$, having
\begin{itemize}
\item objects the triples $(d,\alpha,y)$ where $d$ is an object of
  $\mathcal{C}$,  $\alpha\colon c\rightarrow d$
  in $\partial\slice{c}{\mathcal{C}^-}$, and $y\in X(d)$ such that
  $(X\alpha)(y)=x$, and
\item morphisms $(d,\alpha,y)\rightarrow (d',\alpha',y')$ the maps $\beta\colon
  d\rightarrow d'$ in $\mathcal{C}^-$ such that $\alpha'=\beta\alpha$ and
  $(X\beta)(y')=y$.
\end{itemize}
The condition that $\mathcal{C}$ is elegant says that for each $x\in
X_\dg(c)$, the category $\mathcal{F}_x$
has an initial object (namely, the unique nondegenerate point associated to
$x$), and therefore the colimit of $F_x$ (equal to $q_X^{-1}(x)$) must
be a single point, as desired.
\end{proof}

\begin{rem}
In a preprint version of this paper, we made a stronger statement in
place of \eqref{prop:elegant-means-latching-equals-degenerate};
namely, that $\mathcal{C}$ is elegant \emph{if and only if} all  $q_c^X\colon
L_cX\rightarrow X_\dg(c)$ are bijections.  This assertion is presumably not true, since
for $q_c^X$ to be a bijection it suffices for each category
$\mathcal{F}_x$ (as in the proof of
\eqref{prop:elegant-means-latching-equals-degenerate}) to have have
connected nerve, whereas elegance imposes the stronger condition that
each $\mathcal{F}_x$ have an initial object.
\end{rem}

\begin{prop}
Let $\mathcal C$ be an elegant Reedy category.  Then
for every monomorphism $f\colon X\rightarrow Y$ in $\Psh(\mathcal
  C)$, and every object $c$ of $\mathcal C$, the induced
  map
  \[ g_c\colon X(c)\coprod_{L_cX} L_cY \rightarrow Y(c)\]
  is a monomorphism.
\end{prop}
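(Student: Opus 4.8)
The plan is to compute the set $X(c)\coprod_{L_cX}L_cY$ explicitly using the structure theory already developed for elegant Reedy categories, and then to deduce injectivity of $g_c$ from Proposition~\ref{prop:property-e1}.

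First I would apply Proposition~\ref{prop:elegant-means-latching-equals-degenerate} twice: since $\mathcal C$ is elegant, the canonical surjections $q_c^X\colon L_cX\rightarrow X_\dg(c)$ and $q_c^Y\colon L_cY\rightarrow Y_\dg(c)$ are bijections. By naturality of the latching construction the square
\[
\xymatrix{
{L_cX} \ar[r] \ar[d]_{q_c^X} & {L_cY} \ar[d]^{q_c^Y} \\
{X_\dg(c)} \ar[r] & {Y_\dg(c)}
}
\]
commutes, where the top map is the one induced by $f$ on latching objects and the bottom map is the restriction of $f(c)$ (which does land in $Y_\dg(c)$, as recorded after the definition of degenerate points). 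Consequently the pushout $X(c)\coprod_{L_cX}L_cY$ is canonically isomorphic to $X(c)\coprod_{X_\dg(c)}Y_\dg(c)$, where $X_\dg(c)\rightarrow X(c)$ is the inclusion and $X_\dg(c)\rightarrow Y_\dg(c)$ is the restriction of $f(c)$; under this identification $g_c$ becomes the map induced by $f(c)\colon X(c)\rightarrow Y(c)$ and the inclusion $Y_\dg(c)\hookrightarrow Y(c)$.

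Next I would use the elementary behaviour of pushouts of sets along monomorphisms. Since $f$ is a monomorphism, $f(c)$ is injective, hence so is its restriction $X_\dg(c)\rightarrow Y_\dg(c)$, while $X_\dg(c)\hookrightarrow X(c)$ is obviously injective. It follows that in the pushout $P\defeq X(c)\coprod_{X_\dg(c)}Y_\dg(c)$ both structure maps $X(c)\rightarrow P$ and $Y_\dg(c)\rightarrow P$ are injective, and the intersection of their images equals the image of $X_\dg(c)$. Since $g_c$ restricts along these two maps to $f(c)$ and to the inclusion $Y_\dg(c)\hookrightarrow Y(c)$ respectively, both of which are injective, the injectivity of $g_c$ comes down to the following claim: whenever $p\in X(c)$ and $q\in Y_\dg(c)$ satisfy $f(c)(p)=q$ in $Y(c)$, the elements $p$ and $q$ are identified in $P$.

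Finally, this last claim is precisely where elegance is used. The hypothesis $f(c)(p)=q\in Y_\dg(c)$ says that $f(c)(p)$ is a degenerate point of $Y$; by Proposition~\ref{prop:property-e1} we have $X_\nd(c)=f(c)^{-1}(Y_\nd(c))$, so $p\notin X_\nd(c)$, i.e.\ $p\in X_\dg(c)$. Hence $p$ lies in the glued part of $P$, and its image under $X_\dg(c)\rightarrow Y_\dg(c)$ is exactly $f(c)(p)=q$, so $p$ and $q$ are identified in $P$, as required. The only real obstacle is the bookkeeping of the first two paragraphs --- verifying that the comparison square commutes and that the two legs of the set-theoretic pushout are injective --- after which the result is formal; note also that Proposition~\ref{prop:property-e1}, and hence this whole argument, ultimately rests on property (SP) through Proposition~\ref{prop:elegant-iff-sp}.
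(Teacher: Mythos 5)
Your proposal is correct and follows essentially the same route as the paper: both use Proposition \ref{prop:elegant-means-latching-equals-degenerate} to replace the latching objects by the sets of degenerate points, identify the pushout with $X(c)\amalg_{X_\dg(c)}Y_\dg(c)$, and then invoke Proposition \ref{prop:property-e1} to rule out the only possible failure of injectivity. The sole cosmetic difference is that the paper further rewrites this pushout as the disjoint union $X_\nd(c)\amalg Y_\dg(c)$ and checks injectivity there, whereas you keep the pushout form and argue by cases; these are the same argument.
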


\begin{proof}
Given a map $f\colon X\rightarrow Y$ in $\Psh(\mathcal C)$, and an object $c$
of $\mathcal C$, we consider the following commutative diagram.
\[ \xymatrix{ & {X_\nd(c)\amalg Y_\dg(c)} \ar[d]^{\sim}_{u} \ar[dr]^{g_c''} \\
{X(c)\amalg_{L_cX}L_cY} \ar@{->>}[r]^-{\bar{q}} \ar@/_3ex/[rr]_{g_c} &
{X(c)\amalg_{X_\dg(c)} Y_\dg(c)} \ar[r]^-{g_c'} & {Y(c)} } \]
The map $u$ in the diagram is the evident isomorphism produced using
the disjoint
union decompostion $X(c)\cong X_\nd(c)\amalg X_\dg(c)$.  Recall from
the discussion above that there is a tautological map $p_c^X\colon
L_cX\rightarrow X(c)$, which  factors through a
surjection $q_c^X\colon L_cX\rightarrow X_\dg(c)$.  The map $g_c$ in the
diagram is induced by the tautological maps $p_c^X$ and $p_c^Y$, while
the  map $\bar{q}$ in the
diagram is  induced by the surjections $q_c^X$ and $q_c^X$, and
therefore is itself a surjection.  The map $g_c'$ is the unique one
such that $g_c'\bar{q}=g_c$, and $g_c''= g_c'u$.

We first prove that (1) implies (2), i.e., if $\mathcal C$ is elegant
and $f$ is a monomorphism, then the map $g_c$ is a monomorphism.
Given an injective map $f\colon X\rightarrow Y$, and an object $c$ in
$\mathcal C$, Lemma \ref{prop:elegant-means-latching-equals-degenerate} implies
that $q_c^X$ and $q_c^Y$ are isomorphisms, and hence the map $\bar{q}$
in the above diagram is an isomorphism.  Therefore, it will suffice to
show that $g_c''$ is a monomorphism.  The restriction
$g_c''|_{Y_\dg(c)}$ is the inclusion of $Y_\dg(c)$ in $Y(c)$, and so
is injective.  The restriction $g_c''|_{X_\nd(c)}$ is equal to
$f|_{X_\nd(c)}$.  Thus, to show that $g_c''$ is injective it suffices
to show that
\begin{enumroman}
\item $f|_{X_\nd(c)}$ is injective, and

\item $f(X_\nd(c))\subseteq Y_\nd(c)$.
\end{enumroman}
Statement (i) follows since $f$ is injective, and statement
(ii) is \eqref{prop:property-e1}.  Thus $g_c''$ is injective, and thus
$g_c$ is injective.

Next we show that (2) implies (1). If $X=\varnothing$ and we consider a map $f\colon \varnothing \rightarrow Y$ and an object $c$ in $\mathcal C$, then condition (2) implies that $g_c\colon L_cY\rightarrow Y(c)$ is injective, which implies that the surjection $\bar{q}=q_c\colon L_cY\rightarrow Y_\dg(c)$ is actually an isomorphism.  Thus we have proved (E').
\end{proof}

\begin{prop}
Let $\mathcal C$ be an elegant Reedy category, and let $\mathcal
M=\Psh(\mathcal D, \SSets)$ be the category of simplicial set-valued
presheaves on $\mathcal D$, equipped with the injective  model
category structure
(in which  cofibrations are precisely the monomorphisms, and weak
equivalences are pointwise).  Then the
injective and Reedy model structures on $\Psh(\mathcal C, \mathcal M)$
coincide.
\end{prop}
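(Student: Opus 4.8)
The plan is to reduce everything to a comparison of cofibrations. Both structures have the same weak equivalences: the injective weak equivalences are by definition the pointwise ones, and so are the Reedy weak equivalences, while $\mathcal M=\Psh(\mathcal D,\SSets)$ itself has pointwise weak equivalences. Since a model structure is determined by its weak equivalences together with its cofibrations, it suffices to show that the injective and Reedy cofibrations of $\Psh(\mathcal C,\mathcal M)$ coincide.

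The injective cofibrations of $\Psh(\mathcal C,\mathcal M)$ are the objectwise (in $c\in\mathcal C$) cofibrations of $\mathcal M$; since cofibrations of $\mathcal M$ are exactly the monomorphisms, and monomorphisms of presheaves are detected objectwise, an injective cofibration is precisely a monomorphism of $\Psh(\mathcal C,\mathcal M)$. A Reedy cofibration is a map $f\colon X\to Y$ such that for every object $c$ the relative latching map $g_c\colon X(c)\coprod_{L_cX}L_cY\to Y(c)$ is a cofibration of $\mathcal M$. That every Reedy cofibration is objectwise a cofibration, hence a monomorphism here, holds for an arbitrary Reedy category $\mathcal C$ and model category $\mathcal M$ by the usual induction on degree, so only the converse requires the hypothesis that $\mathcal C$ be elegant.

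For the converse I would reduce to the set-valued statement already proved. Write $\SSets=\Psh(\Delta,\Set)$, so that $\mathcal M=\Psh(\mathcal E,\Set)$ with $\mathcal E=\mathcal D\times\Delta$. For each object $e$ of $\mathcal E$, evaluation at $e$ is a functor $\mathcal M\to\Set$ that preserves all colimits; hence for $X$ in $\Psh(\mathcal C,\mathcal M)$ the set-valued presheaf $X^e\colon c\mapsto X(c)(e)$ satisfies $(L_cX)(e)\cong L_c(X^e)$, and therefore $\bigl(X(c)\coprod_{L_cX}L_cY\bigr)(e)\cong X^e(c)\coprod_{L_c(X^e)}L_c(Y^e)$; in other words, evaluating the relative latching map $g_c$ of $f$ at $e$ gives exactly the relative latching map of the set-valued map $f^e\colon X^e\to Y^e$. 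A map of $\mathcal M$ is a monomorphism if and only if it is one after evaluation at every such $e$. So, given a monomorphism $f$ in $\Psh(\mathcal C,\mathcal M)$, each $f^e$ is a monomorphism in $\Psh(\mathcal C)$, and --- here using that $\mathcal C$ is elegant --- the preceding proposition shows that each relative latching map of $f^e$ is a monomorphism; hence each $g_c$ is a monomorphism of $\mathcal M$, i.e. a cofibration, and $f$ is a Reedy cofibration.

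This shows the Reedy and injective cofibrations agree, and since the weak equivalences agree, the two model structures coincide. The only genuinely delicate point is the bookkeeping of the third paragraph, namely that ``evaluate at $e$'' intertwines the relative latching construction over $\mathcal M$ with that over $\Set$; once this is checked --- and it is purely formal, using only that evaluation is cocontinuous and that monomorphisms of presheaves are objectwise --- all of the real content has been offloaded onto the preceding proposition, which is precisely where elegance enters.
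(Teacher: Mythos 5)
Your proof is correct and follows essentially the same route as the paper: Reedy cofibrations are monomorphisms by the standard argument (the paper cites Hirschhorn 15.7.2), and the converse is exactly the preceding proposition on relative latching maps, which is where elegance is used. The only difference is that you make explicit the reduction from $\mathcal M$-valued to set-valued presheaves via evaluation (cocontinuity of evaluation and objectwise detection of monomorphisms), a step the paper's proof leaves implicit when it invokes the set-valued proposition.
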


\begin{proof}
Reedy cofibrations are always monomorphisms in $\Psh(\mathcal C,
\mathcal M)$ by \cite[15.7.2]{hirsch}.  The converse statement, that
monomorphisms in $\Psh(\mathcal C, \mathcal M)$ are Reedy
cofibrations, was proved for elegant Reedy categories $\mathcal C$ in
the previous proposition.
\end{proof}

\section{The Eilenberg-Zilber Lemma}

We describe a way to prove that certain Reedy categories are elegant,
using an observation of Eilenberg and Zilber.  The notion of
``EZ-Reedy category'' has also been described in
\cite{bm} and \cite{isaacson-cubical},
with a somewhat different formulation.

Let $\mathcal C$ be a Reedy category.  Given a map $\alpha\colon c\rightarrow d$ in $\mathcal C$, let $\Gamma(\alpha)$ denote the set of \emph{sections} of $\alpha$; that is,
\[ \Gamma(\alpha) = \set{\beta\colon d\rightarrow c \text{ in } \mathcal C}{\alpha\beta=1_d}. \]
Note that if $\sigma\colon c\rightarrow d$ is a map in $\mathcal C^-$, then $\Gamma(\sigma)\subseteq \mathcal C^+(d,c)$.

\begin{definition}
A Reedy category $\mathcal C$ is an \emph{EZ-Reedy category} if the following two conditions hold.
\begin{enumerate}
\item [(EZ1)] For all $\sigma\colon c\rightarrow d$ in $\mathcal C^-$, the set $\Gamma(\sigma)$ is non-empty.
\item [(EZ2)] For all pairs of maps $\sigma,\sigma'\colon c\rightarrow d$ in $\mathcal C^-$, if $\Gamma(\sigma)=\Gamma(\sigma')$, then $\sigma=\sigma'$.
\end{enumerate}
\end{definition}

Note that if $\mathcal C$ is EZ-Reedy, then every $\sigma\colon c\rightarrow d$ in $\mathcal C^-$ is a split epimorphism, and therefore $F\sigma\colon Fc\rightarrow Fd$ is a surjection in $\Psh(\mathcal C)$.

The following argument is due to Eilenberg and Zilber; it is proved in
\cite[\S II.3]{gz}.

\begin{prop}
If $\mathcal C$ is an EZ-Reedy category, then $\mathcal C$ is elegant.
\end{prop}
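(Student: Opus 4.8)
The plan is to verify condition (E) of the definition of elegance directly, using (EZ1) to produce sections and (EZ2) to pin down the inverse map. Fix a presheaf $X$ in $\Psh(\mathcal C)$, an object $c$, and a $c$-point $x\in X(c)$. Existence of a pair consisting of a map $\sigma\colon c\to d$ in $\mathcal C^-$ and a point $y\in X_\nd(d)$ with $(X\sigma)(y)=x$ is already available (every point is a degeneracy of at least one non-degenerate point, since $\slice{c}{\mathcal C^-}$ has finite-dimensional nerve), so all the work lies in the uniqueness clause.

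So suppose $(\sigma_s\colon c\to d_s,\ y_s\in X_\nd(d_s))$ for $s=1,2$ both satisfy $(X\sigma_s)(y_s)=x$. First I would use (EZ1) to choose a section $\beta_s\in\Gamma(\sigma_s)$; recall $\beta_s$ automatically lies in $\mathcal C^+$, and applying $X$ to $\sigma_s\beta_s=1_{d_s}$ gives $y_s=(X\beta_s)(x)$. The crucial point is that $\sigma_2\beta_1\colon d_1\to d_2$ lies in $\mathcal C^+$: writing its Reedy factorization as $\mu\nu$ with $\nu\colon d_1\to e$ in $\mathcal C^-$ and $\mu$ in $\mathcal C^+$, one computes $(X\nu)\big((X\mu)(y_2)\big)=(X(\sigma_2\beta_1))(y_2)=(X\beta_1)(x)=y_1$, and non-degeneracy of $y_1$ forces $\nu$ to be an identity. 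By symmetry $\sigma_1\beta_2\colon d_2\to d_1$ also lies in $\mathcal C^+$.

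Next I would observe that $(\sigma_1\beta_2)(\sigma_2\beta_1)$ is an endomorphism of $d_1$ in $\mathcal C^+$, hence an identity for degree reasons, and likewise $(\sigma_2\beta_1)(\sigma_1\beta_2)=1_{d_2}$; thus $\sigma_2\beta_1$ is an isomorphism, and since the only isomorphisms in a Reedy category are identities we get $d_1=d_2=:d$ and $\sigma_2\beta_1=1_d$. Applying $X$ to $\sigma_1\beta_2=1_d$ then gives $y_2=(X\beta_2)(x)=(X(\sigma_1\beta_2))(y_1)=y_1$. To finish, I would prove $\sigma_1=\sigma_2$ via (EZ2) by checking $\Gamma(\sigma_1)=\Gamma(\sigma_2)$: for $\gamma\in\Gamma(\sigma_1)$ we have $(X\gamma)(x)=(X(\sigma_1\gamma))(y)=y$, so factoring $\sigma_2\gamma=\mu'\nu'$ the same way and computing $(X\nu')\big((X\mu')(y)\big)=(X(\sigma_2\gamma))(y)=(X\gamma)(x)=y$, non-degeneracy of $y$ again kills $\nu'$, leaving $\sigma_2\gamma=\mu'$ an endomorphism of $d$ in $\mathcal C^+$, i.e. $1_d$; thus $\gamma\in\Gamma(\sigma_2)$, and symmetrically the reverse inclusion holds, so (EZ2) yields $\sigma_1=\sigma_2$. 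This establishes (E).

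The argument is almost entirely formal; the recurring device is to push a putative degeneracy through a Reedy factorization and use non-degeneracy of the target to collapse the inverse part, and the only real subtlety is bookkeeping — making sure the device is applied to $\sigma_2\beta_1$ (to identify $d$ and $y$) and then to $\sigma_2\gamma$ for \emph{every} section $\gamma$, so that (EZ2) can be brought to bear. Uniqueness of the object and the point comes essentially for free from the Reedy axioms; the one place where (EZ2), as opposed to merely (EZ1), is genuinely needed is the final identification of the map $\sigma$. An alternative route would be to verify property (SP) of Proposition \ref{prop:elegant-iff-sp} by similar section manipulations and then quote that proposition, but the direct verification of (E) above seems no longer.
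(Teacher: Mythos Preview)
Your proof is correct and follows essentially the same route as the paper's: pick sections via (EZ1), use non-degeneracy to force the composites $\sigma_2\beta_1$ and $\sigma_1\beta_2$ into $\mathcal C^+$, collapse the resulting endomorphisms by degree, and finish with (EZ2) after checking $\Gamma(\sigma_1)=\Gamma(\sigma_2)$. You supply more detail than the paper (which is terse and refers to properties labeled (E1)/(E2) from an earlier formulation), but the argument is the same.
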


\begin{proof}
Suppose that $\mathcal C$ is an EZ-Reedy category. Let $f\colon X\rightarrow Y$ be a monomorphism in $\Psh(\mathcal C)$, and suppose that $x\in X(c)$ and $f(x)\in Y_\dg(c)$.  Thus, there exist
$\sigma\colon c\rightarrow d$ in $\mathcal C^-$ and $y\in Y(d)$ such that $(Y\sigma)(y)=f(x)$ and $\sigma$ is not an identity map.  Since $\Gamma(\sigma)$ is non-empty by (EZ1), $\sigma$ is a split epimorphism, and thus $F\sigma$ is a surjection in $\Psh(\mathcal C)$.  Therefore, a dotted arrow exists in the diagram
\[\xymatrix{ {Fc} \ar[r]^{\bar{x}} \ar[d]_{F\sigma} & {X} \ar[d]^{f} \\
{Fd} \ar[r]_{\bar{y}} \ar@{.>}[ur] & {Y} }\]
thus showing that $x\in X(c)$ is also degenerate.  This proves property (E1).

Now suppose $x\in X(c)$, and that there are $\sigma_s\colon c\rightarrow d_s$ in $\mathcal C^-$ and $y_s\in X_\nd(d_s)$ such that $(X\sigma_s)(y_s)=x$, for $s=1,2$.  For any choices of $\delta_s\in \Gamma(\sigma_s)$, we have a diagram
\[\xymatrix{ & {Fd_1}  \ar[dr]^{\bar{y}_1} \ar@/_2ex/[dl]_{F\delta_1} \\
{Fc} \ar[ur]_{F\sigma_1} \ar[dr]^{F\sigma_2} \ar[rr]|{\bar{x}} && {X} \\
& {Fd_2} \ar[ur]_{\bar{y}_2} \ar@/^2ex/[ul]^{F\delta_2} }\]
in which $\bar{y}_1= \bar{y}_1(F\sigma_1)(F\delta_1)= \bar{x}(F\delta_1) = \bar{y}_2(F(\sigma_2\delta_1))$ and $\bar{y}_2=\bar{y}_2(F\sigma_2)(F\delta_2)=\bar{x}(F\delta_2) = \bar{y}_1(F(\sigma_1\delta_2))$.  Since $\bar{y}_1$ and $\bar{y}_2$ are non-degenerate points with a common degeneracy, it follows that $d_1=d_2$, $\bar{y}_1=\bar{y}_2$, and
$\sigma_2\delta_1=1=\sigma_1\delta_2$.  Since $\delta_1$ and $\delta_2$ were arbitrary choices of sections, we see that $\Gamma(\sigma_1)=\Gamma(\sigma_2)$, and thus $\sigma_1=\sigma_2$.
Thus we have proved property (E2).
\end{proof}

\subsection{The category $\Theta_k$ is EZ-Reedy, and so is elegant}


\begin{prop}
Let $\mathcal C$ be a multi-Reedy category such that $\mathcal C(c,d)$ is nonempty for all objects $c,d$ of $\mathcal C$.  If $\mathcal C$ is an EZ-Reedy category, then so is $\Theta \mathcal C$.
\end{prop}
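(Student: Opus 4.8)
The plan is to verify the two EZ-conditions for $\Theta\mathcal C$ directly from the description of $(\Theta\mathcal C)^-$ and from the fact that both $\Delta$ and $\mathcal C$ are EZ-Reedy. Throughout I use the explicit formula: a morphism $f=(\sigma,f_i)\colon [m](c_1,\dots,c_m)\rightarrow [n](d_1,\dots,d_n)$ lies in $(\Theta\mathcal C)^-$ exactly when $\sigma\in\Delta^-$ and, for each $i$ with $\sigma(i-1)<\sigma(i)$, the map $f_i\colon c_i\rightarrow d_{\sigma(i)}$ lies in $\mathcal C^-$. The hypothesis that $\mathcal C(c,d)\neq\varnothing$ for all $c,d$ will be needed precisely to build section-morphisms of $\Theta\mathcal C$ on the ``collapsed'' coordinates of $\sigma$, where no constraint from $\mathcal C^-$ is available.

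For (EZ1), given such an $f$ in $(\Theta\mathcal C)^-$, I would construct a section $g=(\tau,g_j)\colon [n](d_1,\dots,d_n)\rightarrow [m](c_1,\dots,c_m)$ as follows. Since $\sigma\in\Delta^-$ is EZ-Reedy-split, choose $\tau\in\Gamma(\sigma)\subseteq\Delta^+$, so $\sigma\tau=1_{[n]}$; note $\tau$ is injective and $\tau(j)$ lands in the (unique) preimage interval $\{i:\sigma(i-1)<j\leq\sigma(i)\}$. For each $j$, the relevant map to section is $f_{\tau(j)}\colon c_{\tau(j)}\rightarrow d_j$, which is in $\mathcal C^-$ (because $\sigma(\tau(j)-1)<\sigma(\tau(j))=j$, using $\sigma\tau=\mathrm{id}$); pick $g_j\in\Gamma(f_{\tau(j)})\subseteq\mathcal C^+$ by (EZ1) for $\mathcal C$. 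This assembles to a morphism $g$ of $\Theta\mathcal C$, and a short check of composition in $\Theta\mathcal C$ (the $\Delta$-part gives $\sigma\tau=1$, and on each coordinate $j$ the $\mathcal C$-part gives $f_{\tau(j)}g_j=1_{d_j}$) shows $fg=1_{[n](d_1,\dots,d_n)}$. Hence $\Gamma(f)\neq\varnothing$.

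For (EZ2), suppose $f=(\sigma,f_i)$ and $f'=(\sigma',f_i')$ are two maps in $(\Theta\mathcal C)^-$ with the same domain and codomain and $\Gamma(f)=\Gamma(f')$. The first step is to recover $\sigma=\sigma'$: I claim $\Gamma(f)=\Gamma(f')$ forces $\Gamma(\sigma)=\Gamma(\sigma')$ in $\Delta$, after which (EZ2) for $\Delta$ gives $\sigma=\sigma'$. One inclusion: if $\tau\in\Gamma(\sigma)$, the construction in the previous paragraph (using that $\mathcal C(c,d)\neq\varnothing$, and (EZ1) for $\mathcal C$ to fill in the $\mathcal C^-$-coordinates) produces some $g\in\Gamma(f)=\Gamma(f')$ whose underlying $\Delta$-map is $\tau$, forcing $\sigma'\tau=1$, i.e. $\tau\in\Gamma(\sigma')$; by symmetry $\Gamma(\sigma)=\Gamma(\sigma')$. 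With $\sigma=\sigma'$ in hand, fix any $\tau\in\Gamma(\sigma)$; for each $j$ the coordinate maps $f_{\tau(j)},f_{\tau(j)}'\colon c_{\tau(j)}\rightarrow d_j$ both lie in $\mathcal C^-$, and I must show $\Gamma(f_{\tau(j)})=\Gamma(f_{\tau(j)}')$, so that (EZ2) for $\mathcal C$ gives $f_{\tau(j)}=f_{\tau(j)}'$; since every index $i$ with $\sigma(i-1)<\sigma(i)$ is of the form $\tau(j)$ for a suitable choice of section $\tau$ (indeed any value in a nonempty preimage interval is hit by some section of $\sigma$), this yields $f_i=f_i'$ for all relevant $i$, hence $f=f'$. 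The equality $\Gamma(f_{\tau(j)})=\Gamma(f_{\tau(j)}')$ is extracted from $\Gamma(f)=\Gamma(f')$ by choosing a section $g$ of $f$ whose $j$-th $\mathcal C$-coordinate is a prescribed $\delta_j\in\Gamma(f_{\tau(j)})$ and whose other coordinates are filled in arbitrarily using $\mathcal C(c,d)\neq\varnothing$ and (EZ1); then $g\in\Gamma(f')$ forces $f_{\tau(j)}'\delta_j=1_{d_j}$, and conversely.

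The main obstacle is the bookkeeping in the last paragraph: making the ``fill in the remaining coordinates arbitrarily'' step legitimate. Concretely, a section $g=(\tau,g_j)$ of $f$ in $\Theta\mathcal C$ requires, for *every* $j=1,\dots,n$, a morphism $g_j\colon c_{\tau(j)}\rightarrow d_j$ in $\mathcal C$ with $f_{\tau(j)}g_j=1$ — but this only constrains the coordinates where $f_{\tau(j)}\in\mathcal C^-$; one must also check that the composite $fg$ in $\Theta\mathcal C$ really is the identity, which involves the $\Delta$-level composition $\sigma\tau$ matching up the index ranges correctly. I expect the cleanest route is to prove a small lemma first: for $f\in(\Theta\mathcal C)^-$ as above, $\Gamma(f)$ is in natural bijection with $\coprod_{\tau\in\Gamma(\sigma)}\prod_{j=1}^n\Gamma(f_{\tau(j)})$ (here $f_{\tau(j)}$ meaning $f_i$ for the unique $i=\tau(j)$ in the $j$-th interval). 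Granting this description of $\Gamma(f)$, both (EZ1) (nonemptiness, using (EZ1) for $\Delta$ and $\mathcal C$) and (EZ2) (the equality $\Gamma(f)=\Gamma(f')$ unpacks coordinatewise into $\Gamma(\sigma)=\Gamma(\sigma')$ and $\Gamma(f_{\tau(j)})=\Gamma(f_{\tau(j)}')$) become formal, with the hypothesis $\mathcal C(c,d)\neq\varnothing$ entering only to guarantee the product $\prod_j\Gamma(f_{\tau(j)})$ is over a set each of whose factors we can realize independently — i.e. that there is no hidden coupling between coordinates.
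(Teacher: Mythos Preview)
Your strategy---fiber $\Gamma(f)$ over $\Gamma(\sigma)$ and analyze the fibers---is the paper's strategy as well, but your description of the fibers is wrong, and this infects both (EZ1) and (EZ2).

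A morphism $g=(\tau,g_j)\colon [n](d_1,\dots,d_n)\to [m](c_1,\dots,c_m)$ in $\Theta\mathcal C$ does \emph{not} have $g_j$ a single map $d_j\to c_{\tau(j)}$; rather $g_j$ is a \emph{multimorphism} $d_j\to c_{\tau(j-1)+1},\dots,c_{\tau(j)}$, i.e., a tuple $(g_{ji}\colon d_j\to c_i)_{\tau(j-1)<i\leq\tau(j)}$. Your claim that $\sigma(\tau(j)-1)<\sigma(\tau(j))$ is also false in general: take $\sigma\colon[2]\to[1]$ with $\sigma(1)=\sigma(2)=1$ and the section $\tau(1)=2$. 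The map $f_{\tau(j)}$ need not lie in $\mathcal C^-$, and need not even be a unary map. What \emph{is} true is that for each $j$ there is a unique ``jump index'' $i_j$ with $\sigma(i_j-1)<j=\sigma(i_j)$, and $i_j$ lies in $(\tau(j-1),\tau(j)]$ for every section $\tau$; computing $fg$ shows that $fg=1$ forces $g_{j,i_j}\in\Gamma(f_{i_j})$ while the remaining components $g_{ji}$ with $i\neq i_j$ are completely unconstrained in $\mathcal C(d_j,c_i)$ (they are killed by the valence-$0$ multimaps $f_i$). This is exactly where the hypothesis $\mathcal C(d_j,c_i)\neq\varnothing$ enters.

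So your proposed lemma should be corrected to
\[
\Gamma(f)\;\cong\;\coprod_{\tau\in\Gamma(\sigma)}\ \prod_{j=1}^{n}\ \prod_{i=\tau(j-1)+1}^{\tau(j)} G_{ij}(f),
\qquad
G_{ij}(f)=
\begin{cases}
\Gamma(f_{i_j}) & i=i_j,\\
\mathcal C(d_j,c_i) & i\neq i_j,
\end{cases}
\]
viewed as a subset of $\prod_{j,i}\mathcal C(d_j,c_i)$. With this in hand (EZ1) is immediate, and for (EZ2) you compare $\Gamma_\tau(f)$ and $\Gamma_\tau(f')$ as subsets of the same ambient product; since every factor is nonempty you can project to conclude $G_{ij}(f)=G_{ij}(f')$ for all $i,j$, hence $\Gamma(f_{i_j})=\Gamma(f'_{i_j})$ and so $f_{i_j}=f'_{i_j}$. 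This is the paper's argument; once you fix the description of $g_j$, your outline becomes it.
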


\begin{proof}
Fix a morphism
\[ f=(\sigma,f_i)\colon [m](c_1,\dots,c_m)\rightarrow [n](d_1,\dots,d_n) \]  in $(\Theta \mathcal C)^-$.  We first determine the structure of the set of sections $\Gamma(f)$.  Observe that the
functor $\Theta \mathcal C\rightarrow \Delta$ induces a natural map $\varphi_f\colon \Gamma(f)\rightarrow \Gamma(\sigma)$.  For each $\delta\in \Gamma(\sigma)$, we write $\Gamma_\delta(f)$ for the fiber of $\varphi$ over $\delta$.  It is straightforward to check that $\Gamma_\delta(f)$ consists of all maps of the form $g=(\delta,g_j)$, where each $g_j=(g_{ji}\colon d_j\rightarrow c_i)_{\delta(j-1)<i\leq \delta(j)}$ is a multimorphism in $\mathcal C$, with the following property: for $i$ such that $\sigma(i-1)<j= \sigma(i)$, we have $g_{ji}\in \Gamma(f_i)$.

Thus, $\Gamma_\delta(f)$ is in bijective correspondence with a subset of
\[ \prod_{j=1}^n \prod_{i=\delta(j-1)+1}^{\delta(j)} C(d_j,c_i), \] namely the set $G_\delta(f)=\prod_{j=1}^n \prod_{i=\delta(j-1)+1}^{\delta(j)} G_{ij}(f)$, where
\[ G_{ij}(f) =
\begin{cases}
  \Gamma(f_i) & \text{if $\sigma(i-1)<j= \sigma(i)$,} \\
  \mathcal C(d_j,c_i) & \text{otherwise.}
\end{cases} \]
Because every set $\mathcal C(d_j,c_i)$ is non-empty, and since $\Gamma(f_i)$ is non-empty by hypothesis, we have that $\Gamma_\delta(f)$ is non-empty for each $\delta\in \Gamma(\sigma)$.  Thus $\varphi_f$ is surjective, and since $\Gamma(\sigma)$ is non-empty, proving (EZ1).

Now suppose $f=(\sigma,f_i)$ and $f'=(\sigma',f_i')$ are two maps $[m](c_1,\dots,c_m)\rightarrow [n](d_1,\dots,d_n)$ in $(\Theta \mathcal C)^-$, and that $\Gamma(f)=\Gamma(f')$.  Since $\phi_f\colon \Gamma(f)\rightarrow \Gamma(\sigma)$ and $\phi_{f'}\colon \Gamma(f')\rightarrow \Gamma(\sigma')$ are surjective, we must have $\Gamma(\sigma)=\Gamma(\sigma')$, and thus $\sigma=\sigma'$.  Thus, for each $i$ and $j$ such that $\sigma(i-1)<j=\sigma(i)$, we have maps $f_{ij},f_{ij}'\colon c_i\rightarrow d_j$.  For each $\delta\in \Gamma(\sigma)$ we therefore have $\Gamma_\delta(f)=\Gamma_\delta(f')$, which must therefore both correspond to the same subset of
\[ \prod_{j=1}^n\prod_{\delta(j-1)+1}^{\delta(j)} \mathcal C(d_j,c_i). \]
Therefore $G_{ij}(f)=G_{ij}(f')$ for all $\delta(j-1)<i\leq \delta(j)$.  In particular, for every $i=1,\dots,n$ such that $\sigma(i-1)< \sigma(i)$, we have that $\Gamma(f_i)=G_{i\sigma(i)}(f)=G_{i\sigma(i)}(f') = \Gamma(f_i')$, and hence $f_i=f_i'$ since $\mathcal C$ is EZ-Reedy.
\end{proof}

\begin{cor}
For all $k\geq0$, $\Theta_k$ an elegant Reedy category, with direct and inverse subcategories as described in the introduction.
\end{cor}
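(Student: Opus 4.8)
The plan is an induction on $k$, where I write $\Theta_0$ for the terminal category $1$ and $\Theta_{k+1}\defeq\Theta\Theta_k$ (so $\Theta_1\cong\Delta$, and the degree function on $\Theta_k$ obtained by iterating the construction preceding Proposition~\ref{theta} is exactly the one in the introduction). The inductive claim to carry is that $\Theta_k$ is (a) a multi-Reedy category, with structure maps as produced by iterating Proposition~\ref{theta}; (b) an EZ-Reedy category; (c) a category with $\Theta_k(c,d)\neq\varnothing$ for all objects $c,d$; and (d) such that $Ff$ is a monomorphism in $\Psh(\Theta_k)$ for every $f$ in $(\Theta_k)^+(*)$. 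For $k=0$ all four are immediate: $1$ is a multi-Reedy category by the Example above, its only morphism is an identity so (EZ1) and (EZ2) are vacuous, $1(1,1)$ is a one-point set, and for the unique object $c$ of $1$ the presheaf $Fc$ is terminal in $\Psh(1)$, so every $Ff\colon Fc\to Fc\times\cdots\times Fc$ is a map between terminal presheaves, hence a monomorphism.

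For the inductive step, assume (a)--(d) for $\Theta_k$. Then $\Theta\Theta_k$ is a multi-Reedy category by Proposition~\ref{theta}, giving (a). Property (c) is automatic for any category of the form $\Theta\mathcal D$: given objects $[m](c_1,\dots,c_m)$ and $[n](d_1,\dots,d_n)$, the pair $(\alpha,\{f_i\})$ with $\alpha\colon[m]\to[n]$ the constant map at the bottom vertex and each $f_i$ the forced empty multimorphism is a morphism between them. Property (b) for $\Theta\Theta_k$ is precisely the content of the Proposition asserting that $\Theta$ preserves the EZ-Reedy property, whose hypotheses are (a), (b), (c) for $\Theta_k$. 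Property (d) for $\Theta\Theta_k$ follows from the Proposition on the direct sub-multicategory of $\Theta\mathcal C$, whose hypotheses are (a) and (d) for $\Theta_k$. This completes the induction, and the Proposition that every EZ-Reedy category is elegant then shows each $\Theta_k$ is elegant.

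It remains to check that the direct and inverse subcategories agree with the description in the introduction. The degree function matches by construction. Since $\Theta_k$ is elegant it satisfies property (SP) by Proposition~\ref{prop:elegant-iff-sp}, so by remark~(3) following that proposition a morphism $\alpha$ of $\Theta_k$ lies in $(\Theta_k)^-$ if and only if $F\alpha$ is surjective, i.e.\ an epimorphism in $\Psh(\Theta_k)$. For $(\Theta_k)^+$: the case of single-element multimorphisms in (d) gives that $\alpha\in(\Theta_k)^+$ implies $F\alpha$ is a monomorphism; conversely, if $F\alpha$ is a monomorphism, write $\alpha=\alpha^+\alpha^-$ for the Reedy factorization in $\Theta_k$, note $F\alpha^-$ is an epimorphism by remark~(2) following Proposition~\ref{prop:elegant-iff-sp}, and observe that $F\alpha=(F\alpha^+)(F\alpha^-)$ being injective forces $F\alpha^-$ to be injective as well, hence an isomorphism of presheaves; since the Yoneda embedding is fully faithful and a Reedy category has no nonidentity isomorphisms, $\alpha^-$ is an identity, so $\alpha=\alpha^+\in(\Theta_k)^+$.

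I expect the proof to be almost entirely bookkeeping: the substantive work is already carried out by Proposition~\ref{theta}, by the two $\Theta$-preservation results (for the EZ-Reedy property and for monomorphism-hood of $Ff$), and by the implication that EZ-Reedy categories are elegant. The two points needing care are choosing the inductive hypothesis broadly enough that each cited proposition can be supplied its hypotheses (in particular, remembering to propagate (c) and (d), which are not part of being a Reedy category), and the converse direction for $(\Theta_k)^+$ -- that $F\alpha$ being a monomorphism forces $\alpha\in(\Theta_k)^+$ -- which fails for general Reedy categories (cf.\ the Warning after Definition~\ref{deg}) and here depends on the monomorphism-preservation proposition together with elegance.
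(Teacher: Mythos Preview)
Your proof is correct and follows the same inductive strategy as the paper: establish by induction on $k$ that $\Theta_k$ is multi-Reedy, EZ-Reedy, has nonempty hom-sets, and satisfies the monomorphism condition on $F$, then invoke EZ $\Rightarrow$ elegant and identify $\Theta_k^\pm$ via the epi/mono characterization. Your treatment is in fact more explicit than the paper's in two places---you spell out the four inductive hypotheses and verify nonempty hom-sets directly via the constant-at-$0$ map (the paper gestures at the terminal object $[0]$), and you unwind the converse direction for $\Theta_k^+$ rather than appealing to uniqueness of epi--mono factorizations in $\Psh(\Theta_k)$---but the underlying argument is the same.
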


\begin{proof}
Proposition \ref{theta} allows us to put a Reedy model category structure on each $\Theta_k$, and we have shown that with this structure, every $\alpha\colon \theta\rightarrow \theta'$ in $\Theta_k^+$ induces a monomorphism $F\alpha\colon F\theta\rightarrow F\theta'$ of presheaves.

Induction on $k$, together with the fact that each $\Theta_k$ has a terminal object, shows that each $\Theta_k$ with the multi-Reedy structure is an EZ-Reedy category. It follows that $\Theta_k$ is elegant, and also that every $\alpha\colon\theta\rightarrow \theta'$ in $\Theta_k^-$ induces an epimorphism $F\alpha\colon F\theta\rightarrow F\theta'$ of presheaves.

Since any map of presheaves factors uniquely, up to isomorphism, into an epimorphism followed by a monomorphism, this argument shows that $\Theta_k^-$ and $\Theta_k^+$ must be exactly the classes of maps
described in the introduction.
\end{proof}

\section{Reedy multicategories} \label{Reedymulti}

With the definition of multi-Reedy category, which is a multicategory arising from a Reedy category, one might ask whether the notion of Reedy category can be extended to that of a Reedy multicategory.  In this section, we propose a definition and give examples.

\begin{definition}
A \emph{Reedy multicategory} is a symmetric multicategory $\mathcal A$ equipped with the following structure: wide submulticategories $\mathcal A^-$ and $\mathcal A^+$, where $\mathcal A^-$  only has multimorphisms with valence at most $1$, and $\mathcal A^+$ only has multimorphisms with valence at least $1$, together with a function $\deg\colon \ob (\mathcal A) \rightarrow \N$ such that the following properties hold.
\begin{enumerate}
\item For all $f\colon a\rightarrow b_1,\dots,b_m$ in $\mathcal A$ with $m\geq1$, there exists a unique factorization in $\mathcal A$ of the form $f=f^+f^-$, where $f^-$ is in $\mathcal A^-$ and $f^+$ is in $\mathcal A^+$.

\item For all $f\colon a\rightarrow b$ in $\mathcal A^-$, we have $\deg(a)\geq \deg(b)$, with equality if and only if $f$ is an identity map.  For all $f\colon a\rightarrow b_1,\dots,b_m$ in $\mathcal A^+$, we have $\deg(a)\leq  \sum_{i=1}^m \deg(b_i)$; when $m=1$, equality holds if and only if $f$ is an identity map.
\end{enumerate}
\end{definition}

Note that the underlying category of $\mathcal A$ is a Reedy category.

\begin{example}
  Let $\mathcal A$ be a symmetric multicategory with one object, i.e., an operad.  Write $\mathcal A_n$ for $\mathcal A(a;a,\dots,a)$, where $a$ is the unique object.  Suppose that $\mathcal A_0=\mathcal A_1=*$.  Then $\mathcal A$ can be given the structure of a Reedy multicategory, with $\mathcal A^-_k=\mathcal A_k$ for $k\leq1$ and $\mathcal A^+_k= \mathcal A_k$ for $k\geq 1$, and with $\deg(a)=0$.
\end{example}

Given a symmetric multicategory $\mathcal A$, we define a category $\Theta \mathcal A$ as follows.  The objects of $\Theta \mathcal A$ are
\[ [m](a_1,\dots,a_m),\qquad m\geq0,\quad a_1,\dots,a_m\in \ob(\mathcal A). \]
The morphisms
\[ f\colon [m](a_1,\dots,a_m) \rightarrow [n](b_1,\dots,b_n) \]
are given by data $(\alpha,\{f_i\})$, where $\alpha\colon [m]\rightarrow [n]$ in $\Delta$, and for each $i=1,\dots,m$, $f_i\colon a_i\rightarrow b_{\delta(i-1)+1},\dots, b_{\delta(i)}$ in $\mathcal A$.

The category $\Theta \mathcal A$ can be extended to a multicategory.  A multimorphism
\[ f\colon [m](a_1,\dots,a_m)\rightarrow [n_1](b_{11},\dots,b_{1n_1}),\dots, [n_u](b_{u1},\dots,b_{un_u}) \]
is given by $(\alpha, \{f_i\})$ consisting of a multimorphism $\alpha\colon [m]\rightarrow [n_1],\dots,[n_u]$ in $\Delta$, i.e., a sequence of morphisms $\alpha_s\colon [m]\rightarrow [n_s]$ in $\Delta$, and for each
$i=1,\dots, m$ a multimorphism
\[ f_i \colon a_i \rightarrow b_{1,\alpha_1(i-1)+1},\dots,b_{1,\alpha_1(i)},\dots,b_{u,\alpha_u(i-1)+1},\dots, b_{u,\alpha_u(i)}; \]
i.e., a multimorphism with target $(b_{sj})_{1\leq s\leq u,\; \delta_s(i-1)<j\leq \delta_s(i)}$.

As remarked above, the same sort of argument used to prove Proposition \ref{theta} can be used to establish the following result.

\begin{prop}
 $\Theta \mathcal A$ is a Reedy multicategory.
\end{prop}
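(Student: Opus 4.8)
The plan is to transcribe the proof of Proposition~\ref{theta} almost verbatim; the only new feature is that $\mathcal A$ is an arbitrary Reedy multicategory rather than one of the form $\mathcal C(*)$, but the proof of Proposition~\ref{theta} used nothing about $\mathcal C$ beyond the multi-Reedy axioms for $\mathcal C(*)$, so this changes essentially nothing. Throughout, $\Delta$ is the Reedy multicategory with $\Delta^-$ the surjections, $\Delta^+$ the monomorphic families of valence $\geq 1$, and $\deg([m]) = m$. On $\Theta\mathcal A$ I would define the structure by analogy with the definitions preceding Proposition~\ref{theta}: let $(\Theta\mathcal A)^-$ consist of the morphisms $f = (\alpha, \{f_i\})$ for which $\alpha \in \Delta^-$ and, for each $i$ with $\alpha(i-1) < \alpha(i)$, the component $f_i \colon a_i \to b_{\alpha(i)}$ lies in $\mathcal A^-$ (together with the forced valence-$0$ multimorphisms); let $(\Theta\mathcal A)^+$ consist of the multimorphisms $f = (f_s)_{s=1,\dots,u}$ of valence $u \geq 1$ whose underlying $\Delta$-family $(\alpha_s)$ lies in $\Delta^+$ and for which, for each $i$, the induced $\mathcal A$-multimorphism $(f_{sij})$ lies in $\mathcal A^+$; and set $\deg([m](a_1,\dots,a_m)) = m + \sum_{i=1}^m \deg(a_i)$.

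First I would verify that $(\Theta\mathcal A)^-$ is a wide submulticategory of valence $\leq 1$ and that $(\Theta\mathcal A)^+$ is a wide submulticategory of valence $\geq 1$; both are exactly the closure-under-composition computations already carried out in the proof of Proposition~\ref{theta}, namely $h_i = g_{\sigma(i)} f_i$ using closure of $\Delta^-$ and $\mathcal A^-$, and $h_{stijk} = g_{stjk} f_{sij}$ using closure of $\Delta^+$ and $\mathcal A^+$. Next, for axiom~(1), given a multimorphism $f$ of valence $u \geq 1$ in $\Theta\mathcal A$ I would factor its underlying $\Delta$-family uniquely as $\alpha = \delta\sigma$ with $\sigma \in \Delta^-$ and $\delta \in \Delta^+$ (legitimate since $\Delta$ is a Reedy multicategory and $u \geq 1$), then for each $i$ factor the induced $\mathcal A$-multimorphism $f_{*i}$ uniquely as $h_{*\sigma(i)} g_i$ with $g_i \in \mathcal A^-$ and $h_{*\sigma(i)} \in \mathcal A^+$ (legitimate since $\mathcal A$ is a Reedy multicategory and $f_{*i}$ has valence $\geq 1$), and assemble these into $f = hg$ with $g \in (\Theta\mathcal A)^-$ and $h \in (\Theta\mathcal A)^+$; uniqueness at each stage forces uniqueness of the assembled factorization. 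For axiom~(2) I would repeat the two degree estimates from the proof of Proposition~\ref{theta}: if $f \in (\Theta\mathcal A)^-$ then $m \geq n$ and each $\deg(a_i) \geq \deg(b_{\alpha(i)})$, with equality throughout forcing $\sigma = 1_{[m]}$ and all $f_i$ identities; if $f \in (\Theta\mathcal A)^+$ of valence $u$ then $\sum_s n_s \geq m$ and $\deg(a_i) \leq \sum_{s,j} \deg(b_{sj})$ over the relevant indices, and summing over $i$ and reindexing yields $\deg([m](\vec{a})) \leq \sum_s \deg([n_s](\vec{b}_s))$, with equality when $u = 1$ forcing $\delta_1 = 1_{[m]}$ and all $f_i$ identities. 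Finally, the remark after the definition of Reedy multicategory gives for free that the underlying category of $\Theta\mathcal A$ is a Reedy category.

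I expect \textbf{no genuine conceptual obstacle}: as with Proposition~\ref{theta} the argument is entirely formal, and the real work is the bookkeeping of nested subscripts, now with one extra layer coming from $\mathcal A$ being a multicategory, though that layer is purely cosmetic. The one point that needs care is the treatment of valence-$0$ multimorphisms: the Reedy-multicategory axioms demand the factorization~(1) and the valence-$\geq 1$ half of~(2) only for multimorphisms of valence $\geq 1$, and they constrain $\mathcal A^-$ to valence $\leq 1$ and $\mathcal A^+$ to valence $\geq 1$, so one must route the (one-element, hence forced) valence-$0$ multimorphisms of $\Theta\mathcal A$ into $(\Theta\mathcal A)^-$ --- paralleling the way $\Delta$ handles its valence-$0$ multimorphisms --- and check that no axiom is thereby over-claimed. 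A purely editorial remark: since the bare statement ``$\Theta\mathcal A$ is a Reedy multicategory'' presupposes that $\mathcal A$ itself carries such a structure, one should first recall the Reedy-multicategory structures being used on $\mathcal A$ and on $\Delta$.
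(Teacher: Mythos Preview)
Your proposal is correct and matches the paper's approach exactly: the paper gives no detailed proof of this proposition, merely remarking that ``the same sort of argument used to prove Proposition~\ref{theta} can be used,'' and you have spelled out precisely that transcription. Your observation about routing the valence-$0$ multimorphisms and checking that the factorization only ever invokes the Reedy-multicategory axioms on multimorphisms of valence $\geq 1$ is the one genuine bookkeeping point not already handled verbatim by Proposition~\ref{theta}, and you have identified it correctly.
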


\begin{acknowledgements}
The authors would like to thank the anonymous referee for a careful reading and helpful comments.  The first-named author would also like to thank Clemens Berger for pointing out the reference \cite{baues}.
\end{acknowledgements}

\end{document}